\definecolor{wiasblue}   {cmyk}{1.0, 0.60, 0, 0}
\def\mc{\mathcal}
\def\ms{\mathsf}
\def\mb{\mathbb}
\def\Z{\mb Z}
\def\E{\mb E}
\def\P{\mb P}
\def\R{\mb R}
\def\a{\alpha}
\def\de{\delta}
\def\De{\Delta}
\def\e{\varepsilon}
\def\r{\varrho}
\def\ra{\rightarrow}
\def\longra{\longrightarrow}
\def\es{\emptyset}
\def\mc{\mathcal}
\def\Fe{\mc F_{e^c}}
\def\C{\mc C}
\def\pol{\ms{pol}}
\def\been{\begin{enumerate}}
\def\enen{\end{enumerate}}
\def\deg{\ms{deg}}
 \newtheoremstyle{ejpecpbodyit}
 {3pt}
 {3pt}
 {\itshape}
 {}
 {\bfseries\sffamily}
 {.}
 { }
 {}
 \newtheoremstyle{ejpecpbodyrm}
 {3pt}
 {3pt}
 {}
 {}
 {\bfseries\sffamily}
 {.}
 { }
 {}
\undefined\theoremstyle{ejpecpbodyit}\fi%
\newtheorem{theorem}{Theorem}[section]%
\newtheorem{conjecture}[theorem]{Conjecture}%
\newtheorem{lemma}[theorem]{Lemma}%
\newtheorem{proposition}[theorem]{Proposition}%
\undefined\theoremstyle{ejpecpbodyrm}\fi%
\newtheorem{example}[theorem]{Example}%
\keywords{P\'olya urn; network formation, reinforced stochastic processes, percolation}
\subjclass[2010]{60D05;60K35}
\date{\today}
\begin{document}

\title{Weakly reinforced P\'olya urns on countable networks}
\author{Yannick Couzini\'e}
\author{Christian Hirsch}
\address[Yannick Couzini\'e]{Dipartimento di Matematica e Fisica,
                             Università Roma Tre, Largo S.L.~Murialdo
                             00146 Rome, Italy.}
	                 \email{yannick.couzinie@uniroma3.it}
\address[Christian Hirsch]{University of Groningen, Bernoulli Institute, Nijenborgh 9, 9747 AG Groningen, The Netherlands.}
\email{c.p.hirsch@rug.nl}

\begin{abstract}We study the long-time asymptotics of a network of weakly reinforced P\'olya urns. In this system, which extends the WARM introduced by R.~van der Hofstad et.~al.~(2016) to countable networks, the nodes fire at times given by a Poisson point process. When a node fires, one of the incident edges is selected with a probability proportional to its weight raised to a power $\a < 1$, and then this weight is increased by 1. 

We show that for $\a < 1/2$ on a network of bounded degrees, every edge is reinforced a positive proportion of time, and that the limiting proportion can be interpreted as an equilibrium in a countable network. Moreover, in the special case of regular graphs, this homogenization remains valid beyond the threshold $\a = 1/2$.
\end{abstract}

\maketitle
\section{Introduction}\label{sec:intro}
%
%
P\'olya's urn process is the paradigm model for a random process incorporating reinforcement effects. However, when thinking in the direction of applications, a single urn often does not represent complex interactions accurately. For instance, in the field of social sciences, the formation of friendship networks could be related to reinforcement effects in social interactions \cite{rolles}. In economy, in a network of companies competing on a variety of products, the global reputation could influence the market shares of the products differently \cite{benaim, lucas}. Finally, in the domain of neuroscience, it is plausible that synapses that were successful in the past should be selected again in the future and reinforced with a higher probability \cite{warm1}.

%
%
In the present paper, we focus on the network formation model proposed in \cite{warm1} and study its long-time behavior in the regime of weak reinforcement. In that model, starting from a base network, nodes are picked sequentially at random. Once a node is selected, we choose one of the incident edges with a probability proportional to its weight to some power $\a > 0$ and increase that weight by 1. The analysis of \cite{warm1} concerns the asymptotic proportion of times that edges are reinforced in the regime of strong reinforcement, where $\a > 1$. In this regime, the limiting proportion is random and coincides with some stable equilibrium in an associated dynamical system, which is typically concentrated on a small subset of the edges if $\a$ is large.

In contrast, we consider the regime of weak reinforcement, where $\a < 1$ and find that in many settings the reinforcement proportions converge to a uniquely determined limit equilibrium, which is supported on all edges of the base network. Figure \ref{frontFig} illustrates the network at an early and at a late time point.

\begin{figure}[!htpb]
	\begin{tikzpicture}[scale = .7]
\fill (0, 0) circle (3pt);
\fill (0, 1) circle (3pt);
\fill (0, 2) circle (3pt);
\fill (0, 3) circle (3pt);
\fill (0, 4) circle (3pt);
\fill (0, 5) circle (3pt);
\fill (0, 6) circle (3pt);
\fill (0, 7) circle (3pt);
\fill (0, 8) circle (3pt);
\fill (0, 9) circle (3pt);
\fill (1, 0) circle (3pt);
\fill (1, 1) circle (3pt);
\fill (1, 2) circle (3pt);
\fill (1, 3) circle (3pt);
\fill (1, 4) circle (3pt);
\fill (1, 5) circle (3pt);
\fill (1, 6) circle (3pt);
\fill (1, 7) circle (3pt);
\fill (1, 8) circle (3pt);
\fill (1, 9) circle (3pt);
\fill (2, 0) circle (3pt);
\fill (2, 1) circle (3pt);
\fill (2, 2) circle (3pt);
\fill (2, 3) circle (3pt);
\fill (2, 4) circle (3pt);
\fill (2, 5) circle (3pt);
\fill (2, 6) circle (3pt);
\fill (2, 7) circle (3pt);
\fill (2, 8) circle (3pt);
\fill (2, 9) circle (3pt);
\fill (3, 0) circle (3pt);
\fill (3, 1) circle (3pt);
\fill (3, 2) circle (3pt);
\fill (3, 3) circle (3pt);
\fill (3, 4) circle (3pt);
\fill (3, 5) circle (3pt);
\fill (3, 6) circle (3pt);
\fill (3, 7) circle (3pt);
\fill (3, 8) circle (3pt);
\fill (3, 9) circle (3pt);
\fill (4, 0) circle (3pt);
\fill (4, 1) circle (3pt);
\fill (4, 2) circle (3pt);
\fill (4, 3) circle (3pt);
\fill (4, 4) circle (3pt);
\fill (4, 5) circle (3pt);
\fill (4, 6) circle (3pt);
\fill (4, 7) circle (3pt);
\fill (4, 8) circle (3pt);
\fill (4, 9) circle (3pt);
\fill (5, 0) circle (3pt);
\fill (5, 1) circle (3pt);
\fill (5, 2) circle (3pt);
\fill (5, 3) circle (3pt);
\fill (5, 4) circle (3pt);
\fill (5, 5) circle (3pt);
\fill (5, 6) circle (3pt);
\fill (5, 7) circle (3pt);
\fill (5, 8) circle (3pt);
\fill (5, 9) circle (3pt);
\fill (6, 0) circle (3pt);
\fill (6, 1) circle (3pt);
\fill (6, 2) circle (3pt);
\fill (6, 3) circle (3pt);
\fill (6, 4) circle (3pt);
\fill (6, 5) circle (3pt);
\fill (6, 6) circle (3pt);
\fill (6, 7) circle (3pt);
\fill (6, 8) circle (3pt);
\fill (6, 9) circle (3pt);
\fill (7, 0) circle (3pt);
\fill (7, 1) circle (3pt);
\fill (7, 2) circle (3pt);
\fill (7, 3) circle (3pt);
\fill (7, 4) circle (3pt);
\fill (7, 5) circle (3pt);
\fill (7, 6) circle (3pt);
\fill (7, 7) circle (3pt);
\fill (7, 8) circle (3pt);
\fill (7, 9) circle (3pt);
\fill (8, 0) circle (3pt);
\fill (8, 1) circle (3pt);
\fill (8, 2) circle (3pt);
\fill (8, 3) circle (3pt);
\fill (8, 4) circle (3pt);
\fill (8, 5) circle (3pt);
\fill (8, 6) circle (3pt);
\fill (8, 7) circle (3pt);
\fill (8, 8) circle (3pt);
\fill (8, 9) circle (3pt);
\fill (9, 0) circle (3pt);
\fill (9, 1) circle (3pt);
\fill (9, 2) circle (3pt);
\fill (9, 3) circle (3pt);
\fill (9, 4) circle (3pt);
\fill (9, 5) circle (3pt);
\fill (9, 6) circle (3pt);
\fill (9, 7) circle (3pt);
\fill (9, 8) circle (3pt);
\fill (9, 9) circle (3pt);
\draw[dashed] (0, 0) -- (0, 1);
\draw[dashed] (0, 1) -- (0, 2);
\draw[dashed] (0, 2) -- (0, 3);
\draw[line width = 1.0mm] (0, 3) -- (0, 4);
\draw[line width = 1.0mm] (0, 4) -- (0, 5);
\draw[line width = 1.0mm] (0, 5) -- (0, 6);
\draw[line width = 1.0mm] (0, 6) -- (0, 7);
\draw[line width = 1.0mm] (0, 7) -- (0, 8);
\draw[dashed] (0, 8) -- (0, 9);
\draw[dashed] (1, 0) -- (1, 1);
\draw[dashed] (1, 1) -- (1, 2);
\draw[line width = 2.0mm] (1, 2) -- (1, 3);
\draw[dashed] (1, 3) -- (1, 4);
\draw[dashed] (1, 4) -- (1, 5);
\draw[dashed] (1, 5) -- (1, 6);
\draw[dashed] (1, 6) -- (1, 7);
\draw[dashed] (1, 7) -- (1, 8);
\draw[dashed] (1, 8) -- (1, 9);
\draw[line width = 1.0mm] (2, 0) -- (2, 1);
\draw[dashed] (2, 1) -- (2, 2);
\draw[line width = 2.0mm] (2, 2) -- (2, 3);
\draw[line width = 1.5mm] (2, 3) -- (2, 4);
\draw[dashed] (2, 4) -- (2, 5);
\draw[line width = 1.0mm] (2, 5) -- (2, 6);
\draw[dashed] (2, 6) -- (2, 7);
\draw[line width = 1.0mm] (2, 7) -- (2, 8);
\draw[dashed] (2, 8) -- (2, 9);
\draw[line width = 1.0mm] (3, 0) -- (3, 1);
\draw[dashed] (3, 1) -- (3, 2);
\draw[dashed] (3, 2) -- (3, 3);
\draw[line width = 1.0mm] (3, 3) -- (3, 4);
\draw[dashed] (3, 4) -- (3, 5);
\draw[dashed] (3, 5) -- (3, 6);
\draw[dashed] (3, 6) -- (3, 7);
\draw[dashed] (3, 7) -- (3, 8);
\draw[line width = 1.5mm] (3, 8) -- (3, 9);
\draw[dashed] (4, 0) -- (4, 1);
\draw[dashed] (4, 1) -- (4, 2);
\draw[dashed] (4, 2) -- (4, 3);
\draw[dashed] (4, 3) -- (4, 4);
\draw[line width = 1.0mm] (4, 4) -- (4, 5);
\draw[line width = 1.0mm] (4, 5) -- (4, 6);
\draw[line width = 2.0mm] (4, 6) -- (4, 7);
\draw[line width = 1.0mm] (4, 7) -- (4, 8);
\draw[dashed] (4, 8) -- (4, 9);
\draw[dashed] (5, 0) -- (5, 1);
\draw[dashed] (5, 1) -- (5, 2);
\draw[dashed] (5, 2) -- (5, 3);
\draw[dashed] (5, 3) -- (5, 4);
\draw[dashed] (5, 4) -- (5, 5);
\draw[line width = 1.0mm] (5, 5) -- (5, 6);
\draw[dashed] (5, 6) -- (5, 7);
\draw[dashed] (5, 7) -- (5, 8);
\draw[dashed] (5, 8) -- (5, 9);
\draw[line width = 1.5mm] (6, 0) -- (6, 1);
\draw[dashed] (6, 1) -- (6, 2);
\draw[dashed] (6, 2) -- (6, 3);
\draw[line width = 1.0mm] (6, 3) -- (6, 4);
\draw[dashed] (6, 4) -- (6, 5);
\draw[dashed] (6, 5) -- (6, 6);
\draw[dashed] (6, 6) -- (6, 7);
\draw[line width = 1.0mm] (6, 7) -- (6, 8);
\draw[dashed] (6, 8) -- (6, 9);
\draw[dashed] (7, 0) -- (7, 1);
\draw[dashed] (7, 1) -- (7, 2);
\draw[line width = 1.5mm] (7, 2) -- (7, 3);
\draw[line width = 1.0mm] (7, 3) -- (7, 4);
\draw[dashed] (7, 4) -- (7, 5);
\draw[dashed] (7, 5) -- (7, 6);
\draw[line width = 1.0mm] (7, 6) -- (7, 7);
\draw[line width = 1.0mm] (7, 7) -- (7, 8);
\draw[dashed] (7, 8) -- (7, 9);
\draw[line width = 2.5mm] (8, 0) -- (8, 1);
\draw[dashed] (8, 1) -- (8, 2);
\draw[dashed] (8, 2) -- (8, 3);
\draw[dashed] (8, 3) -- (8, 4);
\draw[dashed] (8, 4) -- (8, 5);
\draw[line width = 1.0mm] (8, 5) -- (8, 6);
\draw[dashed] (8, 6) -- (8, 7);
\draw[dashed] (8, 7) -- (8, 8);
\draw[line width = 1.0mm] (8, 8) -- (8, 9);
\draw[dashed] (9, 0) -- (9, 1);
\draw[line width = 1.0mm] (9, 1) -- (9, 2);
\draw[dashed] (9, 2) -- (9, 3);
\draw[line width = 1.0mm] (9, 3) -- (9, 4);
\draw[line width = 1.5mm] (9, 4) -- (9, 5);
\draw[dashed] (9, 5) -- (9, 6);
\draw[dashed] (9, 6) -- (9, 7);
\draw[line width = 1.0mm] (9, 7) -- (9, 8);
\draw[dashed] (9, 8) -- (9, 9);
\draw[dashed] (0, 0) -- (1, 0);
\draw[dashed] (1, 0) -- (2, 0);
\draw[dashed] (2, 0) -- (3, 0);
\draw[dashed] (3, 0) -- (4, 0);
\draw[dashed] (4, 0) -- (5, 0);
\draw[line width = 1.0mm] (5, 0) -- (6, 0);
\draw[dashed] (6, 0) -- (7, 0);
\draw[dashed] (7, 0) -- (8, 0);
\draw[dashed] (8, 0) -- (9, 0);
\draw[dashed] (0, 1) -- (1, 1);
\draw[dashed] (1, 1) -- (2, 1);
\draw[line width = 1.0mm] (2, 1) -- (3, 1);
\draw[line width = 1.0mm] (3, 1) -- (4, 1);
\draw[dashed] (4, 1) -- (5, 1);
\draw[dashed] (5, 1) -- (6, 1);
\draw[line width = 1.0mm] (6, 1) -- (7, 1);
\draw[line width = 1.0mm] (7, 1) -- (8, 1);
\draw[dashed] (8, 1) -- (9, 1);
\draw[line width = 1.0mm] (0, 2) -- (1, 2);
\draw[line width = 1.0mm] (1, 2) -- (2, 2);
\draw[line width = 1.5mm] (2, 2) -- (3, 2);
\draw[dashed] (3, 2) -- (4, 2);
\draw[dashed] (4, 2) -- (5, 2);
\draw[dashed] (5, 2) -- (6, 2);
\draw[dashed] (6, 2) -- (7, 2);
\draw[line width = 1.0mm] (7, 2) -- (8, 2);
\draw[line width = 1.0mm] (8, 2) -- (9, 2);
\draw[dashed] (0, 3) -- (1, 3);
\draw[line width = 1.5mm] (1, 3) -- (2, 3);
\draw[dashed] (2, 3) -- (3, 3);
\draw[dashed] (3, 3) -- (4, 3);
\draw[line width = 2.0mm] (4, 3) -- (5, 3);
\draw[line width = 1.0mm] (5, 3) -- (6, 3);
\draw[dashed] (6, 3) -- (7, 3);
\draw[line width = 1.0mm] (7, 3) -- (8, 3);
\draw[line width = 2.0mm] (8, 3) -- (9, 3);
\draw[dashed] (0, 4) -- (1, 4);
\draw[dashed] (1, 4) -- (2, 4);
\draw[dashed] (2, 4) -- (3, 4);
\draw[dashed] (3, 4) -- (4, 4);
\draw[dashed] (4, 4) -- (5, 4);
\draw[line width = 1.0mm] (5, 4) -- (6, 4);
\draw[dashed] (6, 4) -- (7, 4);
\draw[line width = 2.0mm] (7, 4) -- (8, 4);
\draw[dashed] (8, 4) -- (9, 4);
\draw[dashed] (0, 5) -- (1, 5);
\draw[dashed] (1, 5) -- (2, 5);
\draw[line width = 1.0mm] (2, 5) -- (3, 5);
\draw[dashed] (3, 5) -- (4, 5);
\draw[line width = 1.5mm] (4, 5) -- (5, 5);
\draw[dashed] (5, 5) -- (6, 5);
\draw[line width = 1.5mm] (6, 5) -- (7, 5);
\draw[dashed] (7, 5) -- (8, 5);
\draw[line width = 1.0mm] (8, 5) -- (9, 5);
\draw[line width = 1.0mm] (0, 6) -- (1, 6);
\draw[dashed] (1, 6) -- (2, 6);
\draw[dashed] (2, 6) -- (3, 6);
\draw[line width = 1.0mm] (3, 6) -- (4, 6);
\draw[dashed] (4, 6) -- (5, 6);
\draw[line width = 1.0mm] (5, 6) -- (6, 6);
\draw[dashed] (6, 6) -- (7, 6);
\draw[line width = 1.0mm] (7, 6) -- (8, 6);
\draw[dashed] (8, 6) -- (9, 6);
\draw[line width = 1.0mm] (0, 7) -- (1, 7);
\draw[dashed] (1, 7) -- (2, 7);
\draw[line width = 1.0mm] (2, 7) -- (3, 7);
\draw[line width = 1.0mm] (3, 7) -- (4, 7);
\draw[dashed] (4, 7) -- (5, 7);
\draw[line width = 1.5mm] (5, 7) -- (6, 7);
\draw[dashed] (6, 7) -- (7, 7);
\draw[dashed] (7, 7) -- (8, 7);
\draw[line width = 1.0mm] (8, 7) -- (9, 7);
\draw[line width = 1.5mm] (0, 8) -- (1, 8);
\draw[dashed] (1, 8) -- (2, 8);
\draw[line width = 1.0mm] (2, 8) -- (3, 8);
\draw[dashed] (3, 8) -- (4, 8);
\draw[dashed] (4, 8) -- (5, 8);
\draw[dashed] (5, 8) -- (6, 8);
\draw[dashed] (6, 8) -- (7, 8);
\draw[dashed] (7, 8) -- (8, 8);
\draw[dashed] (8, 8) -- (9, 8);
\draw[line width = 1.0mm] (0, 9) -- (1, 9);
\draw[dashed] (1, 9) -- (2, 9);
\draw[dashed] (2, 9) -- (3, 9);
\draw[dashed] (3, 9) -- (4, 9);
\draw[line width = 1.0mm] (4, 9) -- (5, 9);
\draw[dashed] (5, 9) -- (6, 9);
\draw[dashed] (6, 9) -- (7, 9);
\draw[line width = 1.0mm] (7, 9) -- (8, 9);
\draw[dashed] (8, 9) -- (9, 9);
\end{tikzpicture}\hspace{0.8cm}
	\input{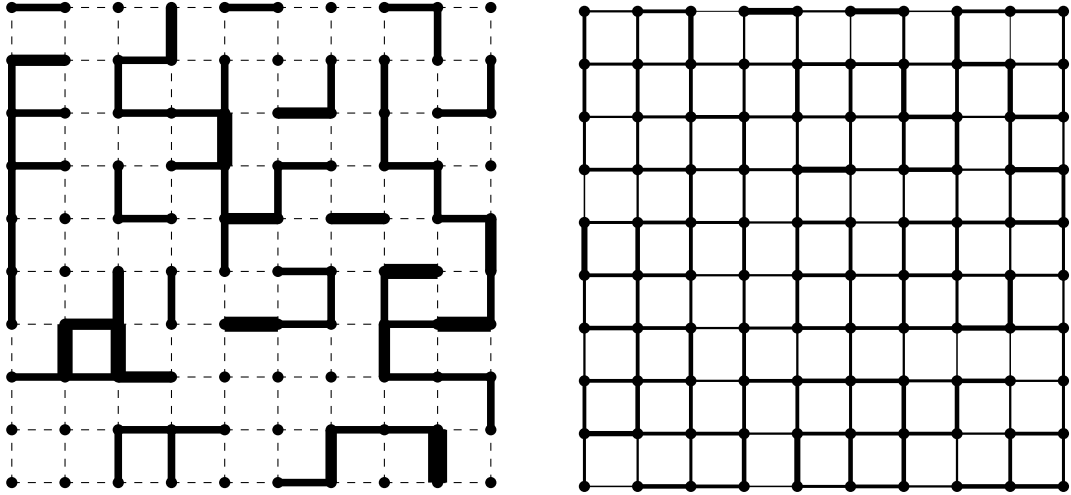}
	\caption{Weakly reinforced P\'olya model after time $2$ (left) and time $100$ (right). Dashed edges have not been reinforced until the considered time. The widths of solid edges are proportional to time-normalized weights.}
	\label{frontFig}
\end{figure}

Hence, together with the analysis in \cite{warm1, warm2}, our main result is a first step towards a network-based analog of Rubin's dichotomy for classical P\'olya urns:\, while for strong reinforcement some edges are only reinforced finitely often, in the weakly reinforced regime all edges are reinforced a positive proportion of time. Although this description outlines the broader picture, more research is needed to carve out the precise conditions for this dichotomy. Indeed, \cite{rubin} describes an example of a meticulously designed network together with firing rates where even for $\a > 1$ there is percolation of edges that are reinforced a positive proportion of time.

%
%
Similarly to the setting of \cite{main_paper}, one major difficulty in the analysis stems from the choice of the base graph. In contrast to \cite{benaim, warm1}, we do not restrict our attention to finite base graphs, but work on a countable network with uniformly bounded degrees. In particular, the highly developed machinery of stochastic approximation algorithms invoked in \cite{benaim, warm1} is not available as the state space of the associated dynamical system would be infinite-dimensional. In \cite{main_paper}, the very strong reinforcement leads to an effective decomposition of the countable network into finite islands separated by edges that are never reinforced. This trick recovers the finite-dimensional setting. However, the strategy cannot work in the regime of weak reinforcement, where we expect all edges to be reinforced a positive proportion of time.

%
%
Hence, in the present paper, we follow an entirely different plan to prove our
main result. Namely, convergence of the normalized edge weights to an
equilibrium for (1) all graphs of uniformly bounded degree if $\a < 1/2$, (2)
regular graphs if $\a < 1/2+\varepsilon$ for some $\varepsilon>0$, and (3) $\Z$ if $\a < 1$. This plan consists of three critical steps. First, we invoke a compactness argument to obtain an equilibrium on the entire countable network. Second, we establish that all edges are reinforced a positive proportion of time. This step rests on a percolation argument, where we decompose the network again into finite islands separated by edges that are now reinforced a positive proportion of time. Finally, to obtain convergence to the equilibrium, we rely on a homogenizing bootstrap argument. It formalizes the intuition that for weak reinforcement, deviations from the equilibrium decrease over time.

%
%
The rest of the manuscript is organized as follows. \cref{sec:results} introduces the model and states
the main contribution of the paper, a homogenization result in the sub-linear
regime for three different combinations of graphs and exponents. In
\cref{sec:bounded}, we consider graphs of bounded degree and $\a < 1/2$.
Finally, \cref{seg:regular} deals with regular graphs of degree $\Delta$ and covers $\a <
1/2+\varepsilon$ for $\De  >  2$ and $\a < 1$ for the graph $\Z$.

\section{Model definition and main result}\label{sec:results}
Let $G  =  (V, E)$ be a countable graph with uniformly bounded degrees. That is, the vertex set $V$ is countable and 
$$\De := \sup_{v \in V} \deg(v) < \infty.$$
If $\De=\deg(v)$ for every $v\in V$ then the graph is regular with degree
$\Delta$, which we write as $\Delta$-regular.

We investigate a system of random variables
\begin{align}
	N_t := {\{N_t(e)\}}_{e\in E}
\end{align}
of interacting P\'{o}lya-type urns on the edge set $E$ at continuous time $t \ge 0$ on a probability space $(\Omega, \P)$. The
dynamics of $N_t$ are a continuous-time analog of the process considered
in~\cite{warm1}. Loosely speaking, every vertex has a Poisson clock and
whenever that clock rings at a node $v \in V$, the dynamics choose and increment the weights on one
of the incident edges $E_v = \{e\in E:\,v\in e\}$ by 1.
The choice of the edge happens using the power-weighted P\'{o}lya-scheme
\begin{align}\label{eqn:polya_weight}
        \pol_{v,e}(N_t): = \frac{{N_t(e)}^\a}
                                      {\sum_{e'\in E_v}N_t(e')^\a}\;,
                                      \qquad \a\ge 0\;.
\end{align}
Henceforth, we tacitly assume that $\a < 1$.
More precisely, the weights $N_t(e)$ are initialized at $1$ and the
dynamics of $N_t$ are governed by an iid family of Poisson
processes ${\{P_v\}}_{v\in V}$ on $[0, \infty)\times[0,1]$ with
intensity $1$ counting clock-ring events at vertex $v\in V$
in a time window $[0,\infty)$ with marks in the range
$[0,1]$.
If the process $P_v$ contains an atom of the form $(t, u) \in [0, \infty) \times [0, 1]$, then increment the mass of an edge $e_i \in E_v = \{e_1, \dots e_{\deg(v)}\}$ by 1 if $u\in U_{v,e_i}$, where ${\{U_{v,
e_i}\}}_{i \le \deg(v)}$ is a partition of $[0,1]$ given by
\begin{align}
	U_{v, e_i} & =  \Big(\sum_{j \le i - 1}\pol_{v, e_{j}}(N_t),\sum_{j \le i }
        \pol_{v, e_{j}}(N_t) \Big]\;.
\end{align}
For the existence of the process $\{N_t\}_{t \ge 0}$ on bounded-degree graphs, see \cite{main_paper}.

Finally, a non-negative vector $\mu\in\R_+^E$ defines an \emph{equilibrium on $G$} if
\begin{align}\label{eqn:equi}
        \mu(e)  :=  \sum_{v\in e}\frac{{\mu(e)}^\a}
                                    {\sum_{e'\in E_v}{\mu(e')}^\a}
\end{align}
holds for all $e\in E$ with $\mu(e) > 0$. This is a straightforward extension
of the notion of finite equilibria from~\cite{warm1} to countable networks. Now, let
\begin{align}
	X_t:=\frac{N_t}t
\end{align}
denote the time-normalized system of weights at time $t > 0$. We say that $X_t$ exhibits
\emph{homogenization} if the equilibrium measure $\mu$ exists, is unique
and $X_t \to \mu$ almost surely. 

\begin{conjecture}[Homogenization]
	Let $G$ be a countable graph with uniformly bounded degrees. Then, $X_t$ exhibits homogenization.
\end{conjecture}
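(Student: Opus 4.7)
The plan is to follow the three-step strategy sketched in the introduction, but to push it through the full regime $\a < 1$ for any countable bounded-degree graph. The three ingredients are existence and uniqueness of the equilibrium $\mu$, positivity of $\liminf_{t\to\infty} X_t(e)$ for every edge, and a homogenising bootstrap proving $X_t \to \mu$.

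For the first ingredient, take a finite exhaustion $G_n \uparrow G$ by induced subgraphs. On each $G_n$, \cite{warm1} produces an equilibrium $\mu^{(n)}$, and the local balance identity \eqref{eqn:equi} at any fixed edge $e$ forces $\mu^{(n)}(e) \in [c_\De, 2]$ for some $c_\De > 0$ depending only on the degree bound. A diagonal extraction yields a pointwise limit $\mu$ satisfying \eqref{eqn:equi} on all of $G$. For uniqueness, one exploits the variational characterisation of finite equilibria (they minimise a strictly convex functional of the edge weights) together with the observation that any two equilibria on $G$ must agree on every finite window by restriction and a comparison argument.

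For the second ingredient, let $E_0 = \{e : \liminf_{t\to\infty} X_t(e) = 0\}$. Quantitative estimates on single-edge reinforcement rates, available because $\a < 1$ forbids any edge from asymptotically monopolising a vertex's activity, give arbitrarily small marginals for the event $\{e \in E_0\}$. A Peierls-style argument on the bounded-degree graph then rules out percolation in $E_0$, so that $E \setminus E_0$ decomposes $G$ into finite clusters on each of which a finite-graph stochastic-approximation argument contradicts the presence of a boundary edge in $E_0$. For the third ingredient, write $Y_t = X_t - \mu$ and iteratively improve the a priori bounds on $Y_t$ via a Lyapunov functional $L_t = \sum_e w_e Y_t(e)^2$ with suitably localised weights. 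Applying It\^o's formula and Taylor expansion of $\pol_{v,e}(\cdot)$ around $\mu$, the drift of $L_t$ becomes contractive once the linearisation of $\nu \mapsto (\sum_{v \in e}\pol_{v,e}(\nu))_e$ at $\mu$ is shown to be a strict contraction in the relevant norm; this is the substance of the bootstrap, and a Gronwall-type estimate then yields polynomial decay of $L_t$.

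The hard part will be that bootstrap for $\a \in [1/2, 1)$ on non-regular graphs. At $\a = 1/2$ the martingale increments of $N_t(e) - \mu(e) t$ are of order $t^{1/2}$, which is the borderline at which a Gronwall-type estimate can still absorb them; beyond this exponent the noise from neighbouring edges feeds back through the non-uniform local Jacobian, and on an arbitrary bounded-degree graph there is no obvious way to control this coupling uniformly. Closing the conjecture will likely require either exploiting negative correlations between neighbouring edges (as the authors effectively do on $\Delta$-regular graphs) or coupling the dynamics to an averaged auxiliary process, which is presumably why the paper attains the full range $\a < 1$ only on the highly symmetric graph $\Z$.
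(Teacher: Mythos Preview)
The statement you are addressing is a \emph{conjecture} in the paper, not a theorem; the paper only establishes the three special cases listed in \cref{mainThm}. Your proposal does not close the conjecture either, as you effectively concede in your final paragraph. The decisive gap is the bootstrap for $\a \in [1/2,1)$ on a general bounded-degree graph. The paper's bootstrap (\cref{bootStrapLem}) shows that if all $e' \in E_e$ satisfy $\C(e') \subseteq [\r^{-1}\mu(e'), \r\mu(e')]$, then $\C(e) \subseteq [\r^{-2\a}\mu(e), \r^{2\a}\mu(e)]$; this is a contraction precisely when $2\a < 1$. Your reformulation via the Lyapunov functional $L_t = \sum_e w_e Y_t(e)^2$ and an appeal to the linearised operator does not circumvent this obstruction: contractivity of that linearisation in any workable norm is exactly the unproved assertion, and on an irregular graph with $\a \ge 1/2$ there is no known choice of weights $w_e$ making the drift of $L_t$ negative. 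The ``Gronwall-type estimate'' is therefore the missing ingredient, not a tool you may invoke. The paper's partial progress beyond $\a = 1/2$ (\cref{lemma:improve_at_least_one}) relies essentially on $\De$-regularity to squeeze out a non-symmetric improvement of one of the two bounds, and nothing in your outline replaces that structure.

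A second, smaller gap is your uniqueness argument. You assert that finite equilibria minimise a strictly convex functional, but no such functional is exhibited, and it is not clear one exists in the form you need; nor is it clear how restriction of an infinite equilibrium to a finite window interacts with such a variational principle, since the restriction is not itself an equilibrium on the window. The paper takes a different route: uniqueness of the non-vanishing equilibrium is not proved \emph{a priori} but is a corollary of the convergence $X_t \to \mu$, which is shown to hold for \emph{any} non-vanishing equilibrium $\mu$ produced by \cref{lemma:existence_for_all}. Your existence step (finite exhaustion and diagonal extraction) and your positivity step (Peierls-type percolation bound) are close in spirit to the paper's \cref{lemma:existence_for_all} and \cref{noPercLem}, and those two pieces do go through for all $\a < 1$; but without the contractive bootstrap they do not yield homogenisation.
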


In the present work, we verify this conjecture for three combinations of $G$ and $\a$.

\begin{theorem}[Homogenization]\label{mainThm}
$X_t$ exhibits homogenization in the following cases.
\begin{enumerate}
\item The degree of $G$ is uniformly bounded and $\a < 1/2$.
\item $G$ is $\De$-regular and $\a < 1/2+\varepsilon$ for some $\varepsilon>0$.
\item $G = \Z$ and $\a < 1$.
\end{enumerate}
\end{theorem}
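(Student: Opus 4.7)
My plan follows the three-step strategy flagged after the informal overview: compactness, percolation, and a bootstrap.

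For Step~1 (existence of an equilibrium on countable $G$), I would exhaust $G$ by an increasing sequence of finite subgraphs $G_n = (V_n, E_n)$ and invoke the finite-graph analysis of \cite{warm1} to produce equilibria $\mu_n$ on $G_n$. Summing \eqref{eqn:equi} over $e \in E_v$ at a fixed vertex $v$ shows $\sum_{e \in E_v} \mu_n(e) \le \Delta$, so $\{\mu_n\}_n$ lies in a compact product subset of $\R_+^E$. A diagonal extraction yields a subsequential limit $\mu$, and continuity of $\pol_{v,e}$ away from vanishing denominators ensures that $\mu$ itself satisfies \eqref{eqn:equi}; the non-degeneracy of the denominators will be supplied by Step~2.

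Step~2 (positive proportion of reinforcement) asks for $\liminf_{t\to\infty} X_t(e) > 0$ almost surely for every $e$. The idea is that if $X_t(e)$ were close to zero over a long time window, then the power-weighted rule \eqref{eqn:polya_weight} forces the other edges at each endpoint of $e$ to absorb almost all of the firings at those vertices, and at least one such neighbouring edge must then be reinforced at an almost-full rate and become a ``good'' edge. Decomposing the network into connected components separated by good edges and using a Peierls-type estimate (leveraging the bounded-degree assumption to control the number of potential bad clusters of a given size) rules out infinite bad clusters and delivers a uniform positive density of reinforcement on every edge.

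Step~3 (bootstrap homogenization) decomposes $X_t(e) - \mu(e) = M_t(e)/t + D_t(e)/t$ into a compensated-Poisson martingale part and a drift part. The lower bound from Step~2 ensures that the drift pulls $X_t$ towards $\mu$ at rate of order $\a\,\mu(e)^{\a-1}$, while the predictable quadratic variation of $M_t$ grows linearly in $t$. Comparing these two scales yields $X_t(e) \to \mu(e)$ whenever $\a < 1/2$, which is case~(1). For case~(2), $\De$-regularity forces $\mu \equiv 1$ by symmetry of \eqref{eqn:equi}, and cancellations between neighbouring edges should allow one to iterate a local $L^2$ estimate against the graph Laplacian so as to stretch the admissible range past $1/2$ by some explicit $\e$. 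For case~(3), each vertex of $\Z$ has only two incident edges, so locally the system reduces to a classical two-colour power-weighted P\'olya urn, for which homogenization is known up to $\a < 1$; the full convergence on $\Z$ then follows by propagating this local convergence along the chain.

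The hard part will be the bootstrap in Step~3 past the Doob-type threshold $\a = 1/2$. The naive martingale estimate saturates there, so one must genuinely exploit the graph structure --- either the symmetry afforded by $\Delta$-regularity or the one-dimensionality of $\Z$ --- to extract extra cancellation. Turning this intuitive cancellation into a quantitative gain, by bootstrapping a crude $L^2$ bound on a single edge into a sharper bound after averaging against the Laplacian on an expanding neighbourhood, is where I expect the technical effort to concentrate.
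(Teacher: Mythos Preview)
Your three-step outline (equilibrium existence, positive-proportion lower bound, bootstrap) matches the paper's structure, but the mechanism you propose for Step~3 is off in a way that matters.

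You attribute the threshold $\a = 1/2$ to a martingale/Doob-type scaling: quadratic variation of order $t$ against a drift of a certain strength. But the compensated part $M_t(e)/t$ tends to zero almost surely for \emph{every} $\a$ --- its predictable quadratic variation is at most $2t$, so the martingale strong law applies regardless of $\a$. The threshold has nothing to do with fluctuations. In the paper the bootstrap is a purely deterministic Poisson comparison: once all $e' \in E_e$ satisfy $X_t(e') \in [\r^{-1}\mu(e'), \r\mu(e')]$ for large $t$, the reinforcement rate of $e$ is sandwiched between the Poisson intensities $\r^{-2\a}\mu(e)$ and $\r^{2\a}\mu(e)$, so $\C(e) \subseteq [\r^{-2\a}\mu(e), \r^{2\a}\mu(e)]$. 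This is a contraction precisely when $2\a < 1$. The obstruction past $1/2$ is that the map $\r \mapsto \r^{2\a}$ stops contracting, not that martingale noise overwhelms the drift.

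Consequently, your plan for cases (2) and (3) --- extracting cancellation from an $L^2$/Laplacian estimate, or reducing $\Z$ locally to a classical two-colour urn and ``propagating along the chain'' --- is aimed at the wrong obstacle. The paper instead refines the deterministic contraction: on a $\De$-regular graph (where $\mu \equiv 2/\De$) one obtains $\C(e)\subseteq[f(a,b),f(b,a)]$ with $f(r,s)=2r^\a/(r^\a+(\De-1)s^\a)$, and an elementary calculus check shows that for $\a$ slightly above $1/2$ at least one of $f(a,b)>a$ or $f(b,a)<b$ still holds, so the interval keeps shrinking. For $\De=2$ one has the exact identity $f(a,b)/f(b,a)=(a/b)^\a$, a strict contraction for all $\a<1$. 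Neither argument touches the stochastic fluctuations at all.

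A smaller point on Step~1: you defer non-vanishing of the limiting $\mu$ to Step~2, but Step~2 concerns the stochastic process $X_t$, not the deterministic fixed point; a diagonal limit of your $\mu_n$ could have zero coordinates and then fail \eqref{eqn:equi}. The paper sidesteps this by applying Schauder's fixed point theorem directly to the operator $T$ on the invariant compact convex set $[2\De^{-1/(1-\a)},2]^E$, which forces $\mu(e)\ge 2\De^{-1/(1-\a)}$ from the outset.
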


\section{Proof of \cref{mainThm} (1)}\label{sec:bounded}
In this section, we establish part (1) of \cref{mainThm}. That is, we show
homogenization for $\a < 1/2$ and graphs of bounded degree. First, in
\cref{sec:existence_equi}, we prove existence of a non-vanishing equilibrium.
Then, in \cref{sec:bounded_limit}, we show that $X_t$ converges to such an
equilibrium, which is in fact uniquely determined.
%
%
\subsection{Existence of equilibrium}\label{sec:existence_equi}

%
%
Henceforth, we call an equilibrium $\mu \in \R_+^E$ \emph{non-vanishing} if $\mu(e) > 0$ for every $e \in E$.
Before discussing existence of non-vanishing equilibria on general graphs, we present the
$\De$-regular case as a particularly easy example.
\begin{example}[$\De$-regular graphs]\label{lemma:equi_zd}
 Let $G$ be a $\De$-regular graph. Then, $\mu \equiv 2/\De$ defines a non-vanishing equilibrium.
Indeed, checking \cref{eqn:equi} leads to
 $$
  \sum_{v\in e}
 \frac{\mu^\a}{\sum_{e'\in E_v}\mu^\a}\\
  = \sum_{v\in e}
 \frac1{\sum_{e'\in E_v}1}\\
  = \frac2\De = \mu.
 $$
\end{example}

%
%
 The general case of bounded-degree graphs does not admit such an easy analysis as
the equilibrium could in principle assume an infinite number of different values. Nevertheless, we deduce existence from a compactness argument.
\begin{proposition}[Existence of non-vanishing equilibria]\label{lemma:existence_for_all}
	Every graph with degrees uniformly bounded by $\Delta$ exhibits at least one non-vanishing
    equilibrium $\mu$ {{with $\mu(e) \ge 2/\De^{1/(1-\a)}$ for all
    $e\in E$}}.
\end{proposition}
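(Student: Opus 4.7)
My plan is to first produce equilibria on finite subgraphs via Brouwer's fixed-point theorem and then extract a limit on the whole countable graph using pointwise compactness in the product topology.

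\emph{Step 1 (finite graphs).} For a finite graph $H=(V_H,E_H)$ of maximum degree at most $\De$, I introduce the continuous ``equilibrium map''
\begin{align*}
T(\nu)(e):=\sum_{v\in e}\frac{\nu(e)^\a}{\sum_{e'\in E_v^H}\nu(e')^\a},\qquad \nu\in(0,\infty)^{E_H},
\end{align*}
whose fixed points are exactly the equilibria on $H$ in the sense of \cref{eqn:equi}. Setting $a:=2/\De^{1/(1-\a)}$ and $b:=2$, I check that $T$ preserves the compact convex box $K:=[a,b]^{E_H}$. The upper bound $T(\nu)(e)\le 2$ is immediate, since $\nu(e)^\a$ appears as one of the summands in the denominator, so each of the two terms in the sum over $v\in e$ is at most $1$. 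For the lower bound, the denominator has at most $\De$ summands, each bounded by $b^\a$, hence $T(\nu)(e)\ge 2a^\a/(\De b^\a)$; the choice of $a,b$ makes the right-hand side equal to $a$. Brouwer's theorem then yields a fixed point $\mu_H\in K$.

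\emph{Step 2 (passage to the countable limit).} Fix a distinguished vertex $o\in V$, let $V_n$ be the ball of radius $n$ around $o$ in $G$, and let $G_n=(V_n,E_n)$ with $E_n$ the set of edges with both endpoints in $V_n$. By Step 1 there is an equilibrium $\mu_n\in[a,b]^{E_n}$ on $G_n$, which I extend to $E$ by setting $\mu_n(e):=a$ for $e\notin E_n$. Since $E$ is countable, $[a,b]^E$ is sequentially compact in the product topology, and along a subsequence we have $\mu_{n_k}\to\mu\in[a,b]^E$ pointwise. To verify that $\mu$ is an equilibrium on $G$, fix $e=\{u,v\}\in E$; for $k$ large enough, all $G$-neighbors of $u$ and $v$ lie in $V_{n_k}$, and therefore $E_u^{G_{n_k}}=E_u$ and $E_v^{G_{n_k}}=E_v$. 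The equilibrium equation for $\mu_{n_k}$ at $e$ then involves only the at most $2\De$ edge weights in the neighborhoods of $u$ and $v$, and pointwise convergence passes to the limit to yield \cref{eqn:equi} for $\mu$. The claimed lower bound $\mu(e)\ge 2/\De^{1/(1-\a)}$ is inherited from $K$.

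The main obstacle I anticipate is the sharp balancing of constants in Step 1: the upper bound $T(\nu)(e)\le 2$ comes for free, while the lower bound requires $a^{1-\a}b^\a\le 2/\De$, and this together with $b=2$ precisely forces the value $a=2/\De^{1/(1-\a)}$. A secondary concern is the ``boundary effect'' of the finite approximations---the equilibrium condition for $\mu_n$ at a vertex of $V_n$ with $G$-neighbors outside $V_n$ uses only the truncated neighborhood---but this is handled cleanly by working with concentric balls and observing that every fixed edge lies strictly inside $G_n$ for all sufficiently large $n$.
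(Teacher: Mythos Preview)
Your proof is correct, and the invariant box $K=[2/\De^{1/(1-\a)},2]^{E_H}$ together with the verification that $T(K)\subseteq K$ is exactly the computation the paper carries out. The route, however, is genuinely different. The paper works directly on the countable product $\R_+^E$: it observes that this space is locally convex and Hausdorff, that $C=[2/\De^{1/(1-\a)},2]^E$ is compact and convex (Tychonoff), and that $T$ is continuous in the product topology, so a single application of the Schauder--Tychonoff fixed point theorem produces the equilibrium on all of $G$ in one stroke. You instead use only the finite-dimensional Brouwer theorem on exhausting subgraphs and then pass to the limit by sequential compactness of the countable product. The trade-off is clear: the paper's argument is shorter and avoids any limiting step, at the price of invoking a fixed point theorem in infinite dimensions; your argument stays within elementary tools, but must check that the equilibrium equation survives the limit (which it does, since \cref{eqn:equi} at each edge involves only finitely many coordinates). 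One small caveat: your exhaustion by balls around a single vertex $o$ tacitly assumes $G$ is connected; for general countable $G$ you would want to let $V_n$ be, say, the union of $n$-balls around the first $n$ vertices in some enumeration, but this is a routine adjustment.
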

\begin{proof}
	%
	%
	By \cref{eqn:equi}, the equilibria correspond to fixed points of the continuous operator
 \begin{align}
 T\colon \R_+^E&\longra \R_+^E\\
 \mu(\cdot)&\longmapsto
 \sum_{v\in \cdot}
 \frac{{\mu(\cdot)}^\a}
  {\sum_{e'\in E_v}{\mu(e')}^\a}.
 \end{align}

	%
	%
	As a product of locally convex and Hausdorff spaces, $\R_+^E$ is again locally convex and Hausdorff. 
Now, define the closed set 
    {$C = {[2\De^{-1/(1 - \a)}, 2]}^E$},
	which as a product of convex and
    compact sets, is again convex and compact. We claim that 
		$T(C) \subseteq C.$
	Once this claim is established, Schauder's fixed point theorem yields a
    non-vanishing equilibrium in $C$.

	To prove $T(C) \subseteq C$, note that for every $\mu\in C$ and $e = \{v_1, v_2\} \in E$ we have
 \begin{align}
	 T(\mu)(e)
  = 
 \frac{\mu(e)^\a}
	 {\sum_{e'\in E_{v_1}}{\mu(e')}^\a} + \frac{\mu(e)^\a}
	 {\sum_{e'\in E_{v_2}}{\mu(e')}^\a}
  \le 2\;, 
 \end{align}
 and
 \begin{align}
	 T(\mu)(e)
  = \sum_{i \le 2}
 \frac{\mu(e)^\a}
	 {\sum_{e'\in E_{v_i}}{\mu(e')}^\a}
 \ge \sum_{i \le 2}
 \frac{\mu(e)^\a}
  {2^\a\deg(v_i)}
  \ge \frac{2\mu(e)^\a}{2^\a {\De}}
  \ge \frac2{{{\De}}^{1/(1 - \a)}},
 \end{align}
 as claimed.
\end{proof}

%
%
\subsection{Proof of Theorem \ref{mainThm} (1)}\label{sec:bounded_limit}
The proof of part (1) of Theorem \ref{mainThm} is based on three pivotal ingredients. 

%
%
First, as each edge is incident to two vertices firing at rate 1, its weight grows at rate at most 2. In other words, for every $e \in E$, almost surely,
$$X_+(e) : = \limsup_{t \to \infty}X_t(e) \le 2.$$
\begin{lemma}[Rate upper bound]\label{lemma:loc_firstboundonC}
	Let $e \in E$ be arbitrary. Then, $\P(X_+(e) \le 2) = 1$.
\end{lemma}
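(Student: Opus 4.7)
The plan is an elementary deterministic domination followed by a Poisson strong law of large numbers. Write $e = \{v_1, v_2\}$. From the definition of the dynamics, $N_t(e)$ can only change at an atom of $P_{v_1}$ or $P_{v_2}$: if $P_v$ has an atom $(t,u)$ with $v \notin e$, then $N_t(e)$ is untouched, while if $v \in e$ the weight is either left unchanged or incremented by exactly $1$, depending on whether $u \in U_{v,e}$ or not. So each increment of $N_t(e)$ can be charged to a distinct atom of $P_{v_1} \sqcup P_{v_2}$ on $[0,t]\times[0,1]$.

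Consequently, letting $F_v(t)$ denote the number of atoms of $P_v$ in $[0,t]\times[0,1]$, I obtain the pathwise bound
\[
N_t(e) \;\le\; 1 + F_{v_1}(t) + F_{v_2}(t),
\]
where the constant $1$ accounts for the initialization $N_0(e)=1$. Each $F_{v_i}$ is a standard Poisson counting process of rate $1$, so the SLLN gives $F_{v_i}(t)/t \to 1$ almost surely as $t\to\infty$. Dividing the displayed inequality by $t$ and taking $\limsup$ yields $X_+(e) \le 2$ almost surely, which is the claim.

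There is no genuine obstacle: the lemma is essentially a sanity check extracting the trivial a priori upper bound on edge growth from the fact that two Poisson clocks of rate one drive each edge. I expect this bound to be paired later with matching lower bounds on $\liminf_{t\to\infty} X_t(e)$ (via the percolation/homogenization steps outlined in the introduction) to pin down the equilibrium.
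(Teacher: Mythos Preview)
Your argument is correct and essentially identical to the paper's: both bound $N_t(e)$ by $1$ plus the total number of atoms of $P_{v_1}$ and $P_{v_2}$ in $[0,t]\times[0,1]$, divide by $t$, and invoke the Poisson SLLN to conclude $X_+(e)\le 2$. The paper writes the same inequality as $X_t(e)\le Y_t+1/t$ with $Y_t=(P_{v_1}([0,t]\times[0,1])+P_{v_2}([0,t]\times[0,1]))/t$, which is your bound verbatim.
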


%
%
Second, we derive a positive lower bound for the growth rate
\begin{align}
        X_-(e) :=\liminf_{t\rightarrow\infty}X_t(e).
\end{align}
 This task is slightly more subtle than the upper bound, since arbitrarily
 small rates can occur with positive probability. To approach this challenge,
 we fix throughout this section a non-vanishing equilibrium $\mu$  as in
 \cref{lemma:existence_for_all} and write 
$$E_e := \{e' \in E:\, e' \cap e \ne \es\}$$
for the family of edges adjacent to a given edge $e \in E$. Then, an edge $e \in E$ is \emph{$\de$-stable} if
\begin{align}
	\min_{e' \in E_e} \frac{X_-(e')}{\mu(e')} \ge \de.
\end{align}

\begin{lemma}[Rate lower bound]
	\label{noPercLem}
	There exists $\de_0 > 0$ such that with probability $1$,
    all connected components of $\de_0$-unstable edges are finite.
\end{lemma}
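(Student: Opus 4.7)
The plan is to use a Peierls-style percolation argument on the line graph $L(G)$ of $G$. For each $e \in E$ introduce the \emph{bad event}
$B_e := \{X_-(e) < \delta_0\,\mu(e)\},$
so that an edge is $\delta_0$-unstable precisely when some $e' \in E_e$ triggers $B_{e'}$. Any infinite cluster of $\delta_0$-unstable edges therefore contains an infinite self-avoiding path in $L(G)$ along which every vertex (i.e.\ every edge of $G$) satisfies $B$. Since $L(G)$ has maximum degree at most $2(\De-1)$, the number of self-avoiding paths of length $n$ from a fixed edge is at most $(2\De)^n$. Hence it suffices to prove (i) a uniform single-edge tail $\sup_{e \in E}\P(B_e) \le p(\delta_0)$ with $p(\delta_0) \downarrow 0$ as $\delta_0 \downarrow 0$, together with (ii) sufficient decoupling along self-avoiding paths to obtain $\P\bigl(\bigcap_{i \le n} B_{e_i}\bigr) \le p(\delta_0)^{\lceil n/k\rceil}$ for some fixed $k = k(\De)$.

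\textbf{Single-edge bound.} Fix $e = \{v_1, v_2\}$ and apply \cref{lemma:loc_firstboundonC} to each of the at most $2(\De-1)$ edges in $E_{v_1} \cup E_{v_2}$. A finite union bound gives an almost surely finite random time $T$ such that $N_t(e')/t \le 2+\e$ for every such neighbor $e'$ and every $t \ge T$. On this event, the drift of $N_t(e)$ is bounded below for $t \ge T$ by
$$
\sum_{v \in e} \pol_{v,e}(N_t) \;\ge\; \f{2\,N_t(e)^\a}{\De\,\bigl((2+\e)\,t\bigr)^\a}.
$$
Comparing $N_t(e)$ with the ODE $\dot x = 2 x^\a / (\De((2+\e)t)^\a)$, whose only positive attractor is essentially $C^\ast = 2/\De^{1/(1-\a)}$, and bounding the compensated pure-jump martingale (whose quadratic variation is at most $2t$) by Doob's inequality, the fluctuations are of order $\sqrt{t\log t}$; this is precisely where $\a < 1/2$ enters, keeping the fluctuations negligible compared to the linear drift. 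One concludes $X_-(e) \ge C^\ast - \e$ almost surely, and since \cref{lemma:existence_for_all} forces $\mu(e) \le 2$, this gives $\P(B_e) \to 0$ as $\delta_0 \to 0$ uniformly in $e$.

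\textbf{Percolation step.} For a self-avoiding path $e_1,\dots,e_n$ in $L(G)$, retain every $k$-th edge to obtain a subpath of edges at pairwise line-graph distance $\ge k-1$. Conditioning on the Poisson atoms at all vertices not adjacent to this subpath, the remaining randomness splits across the selected edges; combined with the fact that $B_e$ is decreasing in the firing intensity of $\{P_v\}_{v \in e}$, a stochastic-domination/FKG estimate gives $\P\bigl(\bigcap_i B_{e_i}\bigr) \le p(\delta_0)^{\lceil n/k\rceil}$. Choosing $\delta_0$ so small that $p(\delta_0)^{1/k}\cdot 2\De < 1$ and summing over all self-avoiding paths of length $n$ rooted at any fixed edge in an exhaustion of $G$, the Borel--Cantelli lemma rules out infinite clusters of $\delta_0$-unstable edges.

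\textbf{Main obstacle.} The delicate step is the cluster estimate: the long-range coupling through shared weights prevents a naive finite-range dependence argument, so some care is needed to isolate, within each $B_{e_i}$, a part that depends only on local Poisson randomness after conditioning on the dynamics further away. A cleaner route may instead strengthen the single-edge bound to a quantitative tail $\P(X_-(e) < \delta\mu(e)) \le q(\delta)$ with $q(\delta) \downarrow 0$ fast enough to absorb the combinatorial factor $(2\De)^n$ outright; this would bypass the percolation step altogether and directly give an empty set of $\delta_0$-unstable edges almost surely.
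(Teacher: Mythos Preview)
Your overall Peierls strategy matches the paper's: count self-avoiding paths, show a single-edge ``bad'' probability can be made small, and prove the path probability decays geometrically. But two real gaps remain.

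\textbf{The decoupling step.} You correctly flag this as the main obstacle, and your proposed fix via FKG/stochastic domination does not go through: the events $B_e=\{X_-(e)<\de_0\,\mu(e)\}$ are not monotone in the Poisson atoms in any useful sense, since a firing at a neighbor of $e$ can both raise a competing weight and alter all future selection probabilities in a non-monotone way. The paper's resolution is to prove the single-edge bound \emph{conditionally} on the $\sigma$-algebra $\Fe$ generated by all Poisson processes at graph distance at least~$2$ from $e$ (Lemma~\ref{lemma:loc_secondboundonC}): for every $\e>0$ there is $\de>0$ with $\P(X_-(e)\ge\de\mid\Fe)\ge 1-\e$ almost surely, uniformly in $e$. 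Along a self-avoiding path one then extracts at least $n/(2\De^3)$ edges at pairwise graph distance $\ge 4$; for these the conditional bound applies iteratively and yields the product estimate $\e^{\,n/(2\De^3)}$ with no appeal to correlation inequalities. Your ``cleaner route'' at the end does not avoid this either: a single-edge tail $q(\de)\downarrow 0$ says nothing about the joint probability along a path of length $n$ without some form of decoupling.

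\textbf{The single-edge bound.} Your ODE/martingale sketch is internally inconsistent: you conclude $X_-(e)\ge C^\ast-\e$ almost surely, which would already make the set of $\de_0$-unstable edges empty for small $\de_0$ and render the percolation step (and the lemma itself) vacuous. The actual difficulty is that the drift lower bound $2N_t(e)^\a/\bigl(\De((2+\e)t)^\a\bigr)$ depends on $N_t(e)$, which starts at~$1$; a one-shot martingale comparison does not lift this to linear growth. The paper handles it by a dyadic-time bootstrap (Lemma~\ref{lemma:loc_HilfsLemma}): one shows $N_{2^{k\ell}}(e)\ge a_{k,\ell}$ with high $\Fe$-conditional probability for a sequence $a_{k,\ell}$ with $\liminf_\ell a_{k,\ell}2^{-k\ell}>0$, via Poisson concentration at each dyadic step. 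Finally, your attribution of the constraint $\a<1/2$ to this argument is wrong: the paper's conditional lower bound holds for all $\a<1$ and is reused verbatim in parts~(2) and~(3) of Theorem~\ref{mainThm}; the restriction $\a<1/2$ enters only in the separate contraction Lemma~\ref{bootStrapLem}.
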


%
%
Finally, we rely on a bootstrapping procedure to push the weights iteratively closer to 1. 
To that end, define 
\begin{align}
	\C(e) : = [X_-(e), X_+(e)]
\end{align}
as the smallest interval containing the accumulation points of $X_t(e)$.

\begin{lemma}[Bootstrap on bounded degree graphs]
	\label{bootStrapLem}
    Let $\a < 1/2$ and $\r>1$. If $e\in E$ is such that $\C(e') \subseteq
    [\r^{-1}\mu(e'), \r\mu(e')]$ holds for all $e' \in E_e$, then $\C(e)
    \subseteq [\r^{-2\a}\mu(e), \r^{2\a}\mu(e)]$.
\end{lemma}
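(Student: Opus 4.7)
My approach is to use the Doob--Meyer decomposition of $N_t(e)$ together with the equilibrium identity in \cref{eqn:equi} to convert the given bounds on adjacent edges into bounds on $e$. There are three essential steps.

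\emph{Step 1 (martingale reduction).} Since $N_t(e)$ is a unit-jump process whose jumps are compensated by the intensity $\sum_{v\in e}\pol_{v,e}(N_s)$, the process
\[
M_t(e) := N_t(e) - \int_0^t\sum_{v\in e}\pol_{v,e}(N_s)\,ds
\]
is a purely discontinuous square-integrable martingale with $\langle M(e)\rangle_t \le 2t$. A standard $L^2$-argument (Doob along a geometric subsequence plus interpolation) gives $M_t(e)/t \to 0$ almost surely, so the accumulation set $\C(e)$ is determined entirely by the time-averaged drift $t^{-1}\int_0^t\sum_{v\in e}\pol_{v,e}(N_s)\,ds$.

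\emph{Step 2 (drift bound via equilibrium).} Fix $\r'>\r$. Since $E_e$ is finite (of size at most $2\De-1$) and $\C(e')\subseteq[\r^{-1}\mu(e'),\r\mu(e')]$ for every $e'\in E_e$, the definition of $X_\pm$ furnishes a random time $T$ such that for all $t\ge T$ and all $e'\in E_e$,
\[
(\r')^{-1}\mu(e') \le X_t(e') \le \r'\mu(e').
\]
For each $v\in e$ one has $E_v\subseteq E_e$. Rewriting $\pol_{v,e}(N_t)=X_t(e)^\a/\sum_{e'\in E_v}X_t(e')^\a$ and applying these bounds together with the monotonicity of $x\mapsto x^\a$ yields
\[
\pol_{v,e}(N_t) \le \frac{(\r'\mu(e))^\a}{\sum_{e'\in E_v}((\r')^{-1}\mu(e'))^\a} = (\r')^{2\a}\,\frac{\mu(e)^\a}{\sum_{e'\in E_v}\mu(e')^\a},
\]
with the corresponding lower bound obtained by swapping $\r'$ and $(\r')^{-1}$. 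Summing over the two endpoints $v\in e$ and invoking \cref{eqn:equi} for $\mu$ gives
\[
(\r')^{-2\a}\mu(e) \le \sum_{v\in e}\pol_{v,e}(N_t) \le (\r')^{2\a}\mu(e), \qquad t\ge T.
\]

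\emph{Step 3 (conclusion).} Integrating in time, the drift average $t^{-1}\int_0^t\sum_{v\in e}\pol_{v,e}(N_s)\,ds$ is sandwiched between $(\r')^{\pm 2\a}\mu(e)$ up to a vanishing contribution from the transient segment $[0,T]$, so by Step 1, $\C(e)\subseteq[(\r')^{-2\a}\mu(e),(\r')^{2\a}\mu(e)]$. Letting $\r'\downarrow\r$ yields the stated inclusion. The assumption $\a<1/2$ enters only at the very end: it is what makes $r\mapsto r^{2\a}$ a strict contraction at $r=1$, so that iterating this lemma collapses $\C(e)$ onto $\{\mu(e)\}$. The only non-routine ingredient is the martingale negligibility in Step 1, but the bounded quadratic-variation rate makes it standard; everything else is algebraic manipulation of the equilibrium identity.
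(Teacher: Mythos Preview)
Your proof is correct and follows essentially the same line as the paper: relax $\r$ to some $\r'>\r$ (the paper writes $\r_\e=(1+\e)\r$), find a random time $T$ after which all neighboring normalized weights lie in the relaxed window, bound each $\pol_{v,e}(N_t)$ by monotonicity, and sum over $v\in e$ using the equilibrium identity \cref{eqn:equi} to obtain the drift bound $(\r')^{\pm2\a}\mu(e)$. The only difference is in how that drift bound is converted into a bound on $X_\pm(e)$: the paper couples $\{N_t(e)\}_{t\ge T}$ with a constant-rate Poisson process and invokes the Poisson SLLN, whereas you use the Doob--Meyer decomposition and the martingale SLLN for $M_t(e)/t$; both routes are standard and interchangeable here.
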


Before establishing Lemmas~\ref{lemma:loc_firstboundonC}--\ref{bootStrapLem},
we elucidate how they can be combined to yield the proof of part (1) of \cref{mainThm}.

%
%
\begin{proof}[Proof of \cref{mainThm}(1)]
        Let $\de_0 > 0$ be as in \cref{noPercLem}. { Assume without
                loss of generality that $\de_0 \le \De^{-1/(1-\a)}$ so that $2\le
        \mu(e)\De^{1/(1-\a)}\le \mu(e)/\de_0$ for any $e\in E$ where the first
        inequality follows from $\mu(e)\ge 2/\Delta^{1/(1-\a)}$ given by
        \cref{lemma:existence_for_all}}. We first claim that $X_-(e) \ge \de_0\mu(e)$
        almost surely for every $e \in E$.  { Indeed, assume the contrary
        and let $S$ be a connected component of $\de_0$-unstable edges. By
        \cref{noPercLem}, $S$ is finite and we choose $e^*\in S$ (randomly)
        such that $\r:=\frac{\mu(e^*)}{X_-(e^*)}$ is maximal.} Then, $\r^{-1}
        \ge \de_0$ almost surely since otherwise \cref{lemma:loc_firstboundonC}
        and the bootstrap property of \cref{bootStrapLem}{,
        combined with $2\le \mu(e)/\delta_0\le \r\mu(e)$,} would give that
        $\C(e^*) \subseteq [\r^{-2\a}\mu(e^*), 2]$, thereby contradicting that
        \[\frac{X_-(e^*) }{ \mu(e^*)} =
        \r^{-1} < \r^{-2\a}.\]
    {{Hence, the infimum
    \[\r_* := \inf\{\r > 1\colon\C(e) \subseteq [\r^{-1}\mu(e),
    \r\mu(e)] \text{ for every $e \in E$}\}\]
	is at most $\de_0^{-1}$. }}Now,   $\r_* = 1$ almost surely
    since otherwise the bootstrap property yields that $\C(e)
	\subseteq [\r_*^{-2\a}\mu(e), \r_*^{2\a}\mu(e)]$ for every $e \in
    E$, thereby contradicting the choice of $\r_*$.
\end{proof}
%
%
It remains to establish the auxiliary results. We start by proving Lemma \ref{lemma:loc_firstboundonC}.
\begin{proof}[Proof of Lemma \ref{lemma:loc_firstboundonC}]
One extremal case is that each clock-ring event for $P_v$ increments
the value for an edge $e = \{v, w\}\in E$.
The process
\begin{align}
	Y_t := \frac{P^{}_v([0, t]\times[0, 1])
 + P_w([0, t]\times[0, 1])}t
\end{align}
counts the normalized occurrences for this upper bound and has an expected value of
\begin{align}
 \E[Y_t]
 = \frac{\E[P^{}_v([0, t]\times [0, 1])]
  + \E[P^{}_w([0, t]\times [0, 1])]}t
 = \frac{2t}t = 2\;.
\end{align}
Then, the strong law of large numbers for homogeneous Poisson point
processes, gives that almost surely
$ \lim_{t \ra \infty}Y_t = 2$.
Now, 
$
 X_t(e)\le Y_t + \frac1t
 $
for all $t \ge 0$, implies that almost surely,
$
 \limsup_{t \ra \infty} X_t(e)\le \lim_{t \ra \infty}
 Y_t = 2.
 $
\end{proof}

%
%
Next, we verify the bootstrap property.
\begin{proof}[Proof of Lemma \ref{bootStrapLem}]
    Let $\r > 1$ and $e = \{v_1, v_2\} \in E$ be such that $\C(e') \subseteq
    [\r^{-1}\mu(e'), \r\mu(e')]$ for all $e' \in E_e$. Moreover, for $\e > 0$
    set $\r_\e = (1 + \e) \r$. Then, there exists a random time $T  < \infty $
    such that $X_t(e') \in [\r_\e^{-1}\mu(e'), \r_\e\mu(e')]$ for all $t \ge T$
    and $e' \in E_e$. In particular, for every $i \in \{1, 2\}$,
    {
    \begin{align}
            \pol_{v_i,e}(X_t)
            = \frac{{N_t(e)}^\a}
                   {\sum\limits_{\mathclap{e'\in E_{v_i}}}{N_t(e')}^\a}
		   \ge            \frac{{(\r_\e^{-1}\mu(e))}^\a}
		{\sum\limits_{\mathclap{e'\in E_{v_i}}}{{(\r_\e\mu(e'))}^\a}}
           = \r_\e^{-2\a}
           \frac{{\mu(e)}^\a}
                {\sum\limits_{\mathclap{e'\in E_{v_i}}}{{\mu(e')}^\a}}.
    \end{align}}
	Therefore, using that $\mu$ is an equilibrium, $\{N_t(e)\}_{t \ge T}$ is dominated from below by a Poisson process with intensity 
	$$\r_\e^{-2\a}\frac{\mu(e)^\a}
 { 
  \sum\limits_{{\mathclap{\substack{e'\in E_{v_1}}}}}
 {		{\mu(e')^\a}{}}}
	 + \r_\e^{-2\a}\frac{\mu(e)^\a}
 { 
  \sum\limits_{{\mathclap{\substack{e'\in E_{v_2}}}}}
	{		{\mu(e')^\a}{}}} = \r_\e^{-2\a} \mu(e).
	$$
	Hence, almost surely, $X_-(e) \ge \r_\e^{-2\a} \mu(e)$. Similar arguments yield that $X_+(e) \le \r_\e^{2\a} \mu(e)$. 
\end{proof}

The proof of Lemma \ref{noPercLem} is the most challenging part of
the auxiliary results. The main ingredient is
\cref{lemma:loc_secondboundonC} below. It states that $X_-(e)$ is
bounded away from 0 with a high probability, even when conditioning
on $\Fe$, where for $e \in E$ we let 
\[\Fe := \sigma(\{P_v\}_{v \not \in \cup_{e' \in E_e}e'})\]
denote the $\sigma$-algebra generated by all Poisson processes at nodes that are at distance at least~2 away from the edge $e$. 
\begin{lemma}[Compact containment of $\C(e)$] 
	\label{lemma:loc_secondboundonC}
	For every $\e > 0$ there exists $\de > 0$ such that almost surely
	$$\inf_{e \in E}\P( X_-(e) \ge \de\,|\, \Fe) \ge 1 - \e.$$
	In particular, $\P( X_-(e) > 0\,|\, \Fe) = 1$ for every $e \in E$.

\end{lemma}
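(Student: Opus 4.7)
The plan is to exploit that, conditional on $\Fe$, the clocks $\{P_v\}$ at the at most $2\De$ vertices in $\bigcup_{e' \in E_e} e'$ remain independent rate-$1$ Poisson processes. Since this collection is finite with cardinality dictated only by $\De$, any argument purely in terms of these local clocks automatically produces the uniformity in $e$ required by the statement.

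\textbf{Step 1 (Random upper bound on neighbouring weights).} For any $e' = \{u_1, u_2\} \in E_e$ the increments of $N_\cdot(e')$ are driven solely by atoms of $P_{u_1}$ and $P_{u_2}$, giving the pathwise bound
\[
N_t(e') \;\le\; 1 + P_{u_1}\bigl([0,t]\times[0,1]\bigr) + P_{u_2}\bigl([0,t]\times[0,1]\bigr).
\]
Both $u_1, u_2$ lie in $\bigcup_{e''\in E_e} e''$, hence outside the set defining $\Fe$, so the right-hand side is independent of $\Fe$. Standard exponential concentration for rate-$1$ Poisson processes together with a union bound over the at most $2\De-1$ edges in $E_e$ yields, for every $\eta > 0$, a constant $C = C(\eta, \De) < \infty$ such that, almost surely,
\[
\P\bigl(N_t(e') \le C(t+1) \text{ for all } t \ge 0, \ e' \in E_e \,\big|\, \Fe\bigr) \;\ge\; 1 - \eta.
\]

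\textbf{Step 2 (Coupling with an autonomous birth process).} Call the above event $A$; on $A$, writing $e = \{v_1, v_2\}$,
\[
\pol_{v_i, e}(N_t) \;\ge\; \frac{N_t(e)^\a}{\De\, C^\a\,(t+1)^\a},\qquad i\in\{1,2\}.
\]
Thinning $P_{v_1}$ and $P_{v_2}$ at a smaller threshold, one explicitly couples $\{N_t(e)\}$ from below, on $A$, with a time-inhomogeneous birth process $\{M_t\}$ starting at $M_0 = 1$ and jumping at rate $r(m, t) = c\,m^\a/(t+1)^\a$, where $c := 2/(\De\,C^\a)$, in such a way that $N_t(e) \ge M_t$ for every $t \ge 0$.

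\textbf{Step 3 (Linear growth of $M_t$).} The ODE $\dot m = c\,m^\a/(t+1)^\a$ has solution $m(t) \sim c^{1/(1-\a)}(t+1)$. Rigorously, the time change $s := \int_0^t (u+1)^{-\a}du = [(t+1)^{1-\a}-1]/(1-\a)$ turns $M$ into a classical pure-birth process $\widetilde M_s$ with state-dependent rate $c\,\widetilde M^\a$, for which $\widetilde M_s / s^{1/(1-\a)} \to (c(1-\a))^{1/(1-\a)}$ almost surely. Inverting the time change gives $M_t / t \to c^{1/(1-\a)}$ a.s., so $\liminf_{t\to\infty} M_t/t \ge \de$ a.s.\ for an explicit $\de = \de(\a, \De, C) > 0$ that does not depend on $e$. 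Combined with Step 2, $X_-(e) \ge \de$ a.s.\ on $A$, whence $\P(X_-(e) \ge \de \,|\, \Fe) \ge \P(A\,|\,\Fe) \ge 1 - \eta$ uniformly in $e$. Taking $\eta = \e$ proves the first assertion, and since this holds for every $\e > 0$, the second assertion $\P(X_-(e) > 0 \,|\, \Fe) = 1$ follows by letting $\e \downarrow 0$.

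\textbf{Main obstacle.} The delicate point is Step 3: the auxiliary process has a nonlinear rate that vanishes as $t \to \infty$, and while the ODE heuristic cleanly predicts linear growth, extracting an almost-sure lower bound with an \emph{explicit deterministic} constant -- essential for obtaining a $\de$ independent of $e$ -- requires either the time change onto a Yule-type pure-birth process and its known asymptotics, or a direct martingale analysis based on $M_t^{1-\a} - c(1 - \a)(t+1)^{1-\a}$ controlled via its compensator and a quadratic-variation estimate.
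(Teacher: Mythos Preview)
Your proof is correct and takes a genuinely different route from the paper. The paper iterates along a discrete geometric time scale: it introduces thresholds $a_{k,\ell} := \De^{-1/(1-\a)} 2^{k(\ell-1) - k\sum_{2\le i\le\ell}\a^i}$ and proves in a separate lemma that
\[
\P\bigl(N_{2^{k(\ell+1)}}(e) \le a_{k,\ell+1} \text{ and } N_{2^{k\ell}}(e) \ge a_{k,\ell}\,\big|\,\Fe\bigr) \le e^{-c\,2^{k\ell(1-\a)}},
\]
then sums over $\ell$ to bound the probability that the chain $\{N_{2^{k\ell}}(e) \ge a_{k,\ell}\}_{\ell\ge1}$ ever breaks; on the event that it never does, $X_-(e) \ge \liminf_\ell a_{k,\ell}2^{-k\ell} > 0$. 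You instead perform a single global coupling with an autonomous sublinear birth process and read off linear growth via a time change to a classical pure-birth chain with rate $c\,m^\a$. Both arguments rest on the same crude local input---neighbouring weights grow at most linearly because each edge is fed by two rate-$1$ clocks---but process it differently: the paper's iteration is elementary and self-contained, yielding explicit exponential tails at every scale; your coupling is more conceptual, shorter, and even identifies the limiting slope $c^{1/(1-\a)}$ on the good event. One small point in Step~2 is worth tightening: ``thinning $P_{v_1},P_{v_2}$ at a smaller threshold'' tacitly assumes the acceptance region $U_{v_i,e}$ contains the interval $[0,\tfrac{c}{2}M_{t-}^\a(t+1)^{-\a}]$, which the model's enumeration-dependent partition of $[0,1]$ need not guarantee; this is harmless, since one may either relabel so that $e$ is listed first at each endpoint, or represent the jumps of $N(e)$ through a single auxiliary Poisson measure on $[0,\infty)^2$ and thin that instead.
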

Hence, we conclude from dependent percolation theory in the form of \cite[Theorem 0.0]{domProd} that the edges violating the lower bound are restricted to finite, well-separated islands. To make the presentation self-contained, we give an elementary direct proof.

\begin{proof}[Proof of Lemma \ref{noPercLem}]
    Let $v \in V$ be arbitrary. We resort to a first-moment argument and show
    that for sufficiently small $\de$ the expected number of length-$n$
    self-avoiding paths of $\de$-unstable edges tends to 0 as $n \to \infty$.
    Since the number of length-$n$ self-avoiding paths in $G$ starting from $v$
    is at most $\De^n$, it suffices to show that the probability for any fixed
    self-avoiding path to consist of $\de$-unstable edges only is of the order
    at most $(2\De)^{-n}$. 

	Note that the number of edges that are at graph distance at most 3 of a given edge is bounded above by $2\De^3$. Hence, any self-avoiding path of length $n$ contains at least $n / (2\De^3)$ edges that are of pairwise distance at least 4, so that by Lemma \ref{lemma:loc_secondboundonC}, the probability that they are all $\de$-unstable is at most $\e(\de)^{n / (2\De^3)}$. In particular, choosing $\de > 0$ such that $\e(\de)^{1 / (2\De^3)} \le 1 / (2\De)$ concludes the proof.
\end{proof}

It remains to prove Lemma \ref{lemma:loc_secondboundonC}. To that end, we invoke a conditioning argument in the spirit of \cite[Lemma 5.2]{benaim} to provide a bootstrap result propagating large edge weights at a current time point to a considerable duration into the future.
For $k, \ell \ge 1$ put 
 \begin{align}
   a_{k, \ell} := \De^{-\frac1{1 - \a}}
    2^{k(\ell - 1) - k \sum_{2 \le i \le \ell}\a^i}.
 \end{align}
\begin{lemma}[Bootstrapped lower bound]
	\label{lemma:loc_HilfsLemma}
	There exists a constant $c = c(\a, \De) > 0$ such that for all $k$ large enough, all $\ell\ge 1$ and all $e \in E$, almost surely, 
	\begin{align}\label{eqn:condProb}
  &\P(N_{2^{k(\ell + 1)}}(e)
		\le a_{k, \ell + 1} \text{ and }N_{2^{k\ell}}(e) \ge a_{k, \ell}
 \,|\, \Fe )
		\le e^{
 			-c2^{k\ell(1 - \a)}
 }.
 \end{align}
\end{lemma}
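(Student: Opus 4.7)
The strategy is a concentration argument for the jump count $M:=N_{2^{k(\ell+1)}}(e)-N_{2^{k\ell}}(e)$. The key observation is that on the event $A:=\{N_{2^{k\ell}}(e)\ge a_{k,\ell}\}$, monotonicity of $t\mapsto N_t(e)$ keeps the weight above $a_{k,\ell}$ throughout $[2^{k\ell},2^{k(\ell+1)}]$; combined with a high-probability upper bound on neighboring edges, this forces a deterministic lower bound on the P\'olya probabilities $\pol_{v_i,e}$ and hence on the jump intensity of $N_t(e)$.

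First I would control $N_t(e')$ for $e'\in E_e$. Both endpoints of any such $e'$ lie within graph distance one of $e$, so their Poisson clocks are independent of $\Fe$ with their original rate-one distribution, and $N_t(e')$ is dominated by the sum of the two endpoint Poisson counts plus one. A Poisson Chernoff bound and a union over the at most $2\De$ edges in $E_e$ therefore give an event $G$ with
$$\P(\overline{G}\mid \Fe)\le 2\De\,e^{-c_0\cdot 2^{k(\ell+1)}}$$
on which $N_t(e')\le 3\cdot 2^{k(\ell+1)}$ for every $e'\in E_e$ and every $t\le 2^{k(\ell+1)}$ (using monotonicity). On $A\cap G$, the jump intensity of $N_t(e)$ is then deterministically bounded below by
$$r:=\frac{2\,a_{k,\ell}^\a}{\De\cdot 3^\a\cdot 2^{k(\ell+1)\a}}$$
throughout $[2^{k\ell},2^{k(\ell+1)}]$. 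A standard exponential-martingale Chernoff argument, applied to the counting process $N_t(e)$ stopped at the first failure of the intensity bound, then yields
$$\P\big(M\le \lambda/2,\,A,\,G\mid \Fe\big)\le e^{-c_1\lambda},\qquad \lambda:=r\,(2^{k(\ell+1)}-2^{k\ell}),$$
for some constant $c_1>0$.

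The delicate step is to translate this into the claimed bound, which requires exponent arithmetic matched to the shape of $a_{k,\ell}$. Using $\a\sum_{i=2}^\ell\a^i=\sum_{i=3}^{\ell+1}\a^i$, a direct computation gives
$$\lambda\ge \frac{1}{3^\a}\cdot 2^{k(1-\a)^2}\cdot a_{k,\ell+1}.$$
For $k$ large enough this forces $\lambda/2\ge a_{k,\ell+1}\ge a_{k,\ell+1}-a_{k,\ell}$, and hence $\{N_{2^{k(\ell+1)}}(e)\le a_{k,\ell+1}\}\cap A\subseteq\{M\le \lambda/2\}$. Moreover, subtracting $k\ell(1-\a)$ from the exponent of $2$ in $\lambda$ leaves $k\ell\a + k(1-\a)^2 - k\sum_{i=2}^{\ell+1}\a^i$; using $\sum_{i=2}^{\ell+1}\a^i\le \a^2/(1-\a)$, this is minimized at $\ell=1$ with value $k\,[(1-\a)^3+\a(1-2\a)]/(1-\a)$, which is strictly positive for $\a<1/2$. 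Hence $\lambda\ge c_2\cdot 2^{k\ell(1-\a)}$, and combining with $\P(\overline G\mid\Fe)\le e^{-c_3\cdot 2^{k\ell(1-\a)}}$ (an easy over-estimate, since $2^{k(\ell+1)}\gg 2^{k\ell(1-\a)}$), the lemma follows.

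The main obstacle is precisely this exponent book-keeping: while the concentration ingredients are routine, the specific form $a_{k,\ell}=\De^{-1/(1-\a)}\,2^{k(\ell-1)-k\sum_{i=2}^\ell\a^i}$ is what makes the bootstrap close, and one must verify $\lambda\gtrsim a_{k,\ell+1}\cdot 2^{k(1-\a)^2}$ together with the positivity of the net exponent $k\ell\a+k(1-\a)^2-k\sum_{i=2}^{\ell+1}\a^i$. A secondary technical subtlety is that the intensity lower bound holds only on $A\cap G$, so the Poisson concentration must be implemented via a stopping-time truncation (or an exponential-martingale argument restricted to the good event) rather than by naive stochastic domination.
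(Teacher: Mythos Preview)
Your overall architecture matches the paper's proof: a Poisson upper bound on the neighbouring weights, a resulting deterministic lower bound on $\pol_{v_i,e}$, and Poisson concentration for the increment $M$ over $[2^{k\ell},2^{k(\ell+1)}]$. The identity $\lambda\gtrsim 2^{k(1-\a)^2}a_{k,\ell+1}$ is also exactly what the paper obtains.

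The gap is in your final exponent estimate. You bound $\sum_{i=2}^{\ell+1}\a^i\le \a^2/(1-\a)$, which after subtracting $k\ell(1-\a)$ leaves $k[(1-\a)^3+\a(1-2\a)]/(1-\a)$ at $\ell=1$; this is \emph{not} positive for all $\a<1$ (it changes sign near $\a\approx 0.57$), and you yourself only claim $\a<1/2$. But the lemma is stated under the paper's standing assumption $\a<1$ and is invoked, via Lemmas~\ref{lemma:loc_secondboundonC} and~\ref{noPercLem}, in the proofs of parts~(2) and~(3) of Theorem~\ref{mainThm}, where $\a$ exceeds $1/2$. So as written your argument does not prove the lemma in the required generality.

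The fix is a one-line replacement: use the cruder but $\ell$-aware bound $\sum_{i=2}^{\ell+1}\a^i\le \ell\a$ (each of the $\ell$ terms is at most $\a$). This is precisely the paper's observation $a_{k,\ell+1}\ge \De^{-1/(1-\a)}2^{k\ell(1-\a)}$, and it leaves a net exponent of at least $k(1-\a)^2>0$, uniformly in $\ell$ and valid for every $\a<1$. With this change your proof goes through for the full range. (Your remark about needing a stopping-time or exponential-martingale device is slightly heavier than necessary: since the law of $\{N_t\}$ does not depend on the edge ordering in the construction, one may place $e$ first at both endpoints so that $U_{v_i,e}=[0,\pol_{v_i,e}(N_{t-})]\supseteq[0,r/2]$, and then the Poisson count in $[2^{k\ell},2^{k(\ell+1)}]\times[0,r/2]$ gives a direct, $\Fe$-independent lower bound for $M$ on $A\cap G$, which is how the paper proceeds.)
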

\begin{proof}
	Let $\e > 0$ be arbitrary.
	Then, the Poisson concentration inequality \cite[Lemma 1.2]{penrose} implies that the event
	 \begin{align}
		 A_e:=\{N_{2^{k(\ell + 1)}}({e'}) \le		 2^{1 + \e +  k(\ell +
         1)}\ \text{for all}\ e'\in E_{e}\}\label{eqn:upper_bound_weights}
 \end{align}
		has a probability tending to $1$ with an error decaying exponentially in
        $2^{k\ell}$.
 
	Further, under the event $\{N_{2^{k\ell}}(e)\ge a_{k, \ell}\} \cap A_e$, 
 \cref{eqn:polya_weight} has a lower bound for times
 $t \in T_\ell : = [2^{k\ell}, 2^{k(\ell + 1)}]$ given by
 \begin{align}
 {\ms{pol}_{v, e}(N_t)}
  = \frac
  {N_t(e)^\a}{\sum_{e'\in E_v}N_t({e'})^\a}
 \ge
 \frac
  {a_{k, \ell}^\a}{a_{k, \ell}^\a
   + \De
  2^{(1 + \e + k(\ell + 1))\a}}.
 \end{align}
	Since $\De a_{k, \ell + 1} = a_{k, \ell}^\a 2^{k (\ell + \a) (1 - \a)}$, putting $\a_\e = \a (1 + \e)$, we deduce that
	\begin{align}
		{\ms{pol}_{v, e}(N_t)}&\ge 	 
		\frac1{1 + 2^{\a_\e} 2^{k (\ell + 1) \a}
    a_{k, \ell + 1}^{-1} 2^{k (\ell + \a)(1 - \a)}
  }
		 = \frac1{1 + 2^{\a_\e + k(\ell + 1 - (1 - \a)^2)}
    a_{k, \ell + 1}^{-1}
}.
	\end{align}
	Hence,
	\begin{align}
	{\ms{pol}_{v, e}(N_t)}
		\ge \frac {2^{-\a_\e + k((1 - \a)^2 - \ell - 1)} a_{k, \ell + 1}}
    {1 + 2^{-k\a}}
 = :{G_{k, \ell}}\;.
 \end{align}
 In particular, for $t\in T_\ell$, 
 the Poisson processes $P_v$ and $P_w$ having points in
 $[2^{k\ell}, t]\times U_e$
 where $U_e\subseteq[0, 1]$ and $|U_e|\ge
 G_{k, \ell}$ implies weight increases of the edge $e$
 (by one or more) in the time interval $[2^{k\ell}, t]$.
 Therefore, using \cref{eqn:upper_bound_weights}, we find a constant $c$ such  that
 \begin{align}
     &\P\left(N_{2^{k(\ell + 1)}}(e)
             \le a_{k, \ell + 1}\text{ and } 
 N_{2^{k\ell}}(e)\ge a_{k, \ell}\right)\\
        &\quad\le\P\left(N_{2^{k(\ell + 1)}}(e)-N_{2^{k\ell}}(e)\le 
 a_{k, \ell + 1}\text{ and }
 N_{2^{k\ell}}(e)\ge a_{k, \ell}\right)\\
        &\quad\le 
  \P\left(P_v(T_\ell\times U_e)
  + P_w(T_\ell\times U_e)\le a_{k, \ell + 1}
  \text{ and } N_{2^{k\ell}}(e)\ge a_{k, \ell}\right)
  + {O}\big(e^{-c 2^{k\ell}}\big)\\
 &\quad \le \P
 \left(
  \ms{Poi}\left(
  2|T_\ell| G_{k, \ell}\right)
  \le a_{k, \ell + 1}\label{eqn:poisson_param}
  \right) + {O} \big( e^{-c 2^{k\ell}} \big).
 \end{align}
 Since
 \begin{align}
 2|T_\ell|G_{k, \ell} = 
 a_{k, \ell + 1}2^{1 - \a_\e + k(1 - \a)^2}
 \frac{1 - 2^{-k}}{1 + 2^{-k\a}}
 \end{align}
 there exists $\tilde c > 1$ such that $
 2|T_\ell|G_{k, \ell}
  > \tilde c\cdot a_{k, \ell + 1}
  $ holds for all $k$ large enough.
 Thus, by \cite[Lemma 1.2]{penrose},
 \begin{align}
 \P\left(N_{2^{k(\ell + 1)}}(e)\le a_{k, \ell + 1}\text{ and } 
 N_{2^{k\ell}}(e)\ge a_{k, \ell}\right)&\le
  \exp\left((1 - \tilde c + \log(\tilde c))a_{k, \ell + 1}\right)\;.
 \end{align}
 Now, we conclude the proof by noting that $a_{k, \ell + 1} \ge \De^{-\frac1{1 - \a}}
		2^{k\ell(1 - \a)}$ and that  the coefficient of
 $a_{k, \ell + 1}$ is negative. 
 \end{proof}
%
%
\cref{lemma:loc_HilfsLemma} propagates the lower bounds
on the edge weights through time. As $t \ra \infty$,
these lower bounds allow to exclude vanishing edge weights.

\begin{proof}[Proof of Lemma \ref{lemma:loc_secondboundonC}] Defining 
	$F = \cap_{\ell \ge 1}\{N_{2^{k\ell}}(e)\ge a_{k, \ell}\}$ and 
$\de_0 = \liminf_{\ell \to \infty}a_{k, \ell}2^{-k\ell} > 0,$
	we note that 
	$X_-(e) \ge \de_0$ holds under the event $F$. 	Hence, it remains to establish a lower bound on $\P(F|\Fe)$. 

Since $a_{k, 1} \le 1$, the bound $N_{2^k}(e) \ge a_{k, 1}$ holds for any $k \ge 1$. Hence, if the event $F$ does not occur, then there exists $\ell_0 \ge 1$ such that $N_{2^{k(\ell_0 + 1)}}(e)		\le a_{k, \ell_0 + 1}$ and $N_{2^{k\ell_0}}\ge a_{k, \ell_0}$. In particular, by Lemma \ref{lemma:loc_HilfsLemma}, almost surely,
	$$1 - \P(F\,|\, \Fe) \le \sum_{\ell \ge 1}\P(N_{2^{k(\ell + 1)}}(e)
		\le a_{k,\ell + 1} \text{ and }N_{2^{k\ell}}\ge a_{k, \ell}\,|\,\Fe) \le \sum_{\ell \ge 1}e^{		-c2^{k\ell (1 - \a)} },$$
which becomes smaller than $\e$ for sufficiently large $k > 0$.
\end{proof}

\section{Regular graphs}\label{seg:regular}
%
%
Throughout this section, we assume $G$ to be $\De$-regular.  The key to the
proof of parts (2) and (3) of \cref{mainThm} is the following bootstrap property.

\begin{lemma}[Bootstrap on regular graphs]\label{lemma:multi_tightenBounds}
    Let $a < 2/\De$, $b > 2/\De$ and $e \in E$ be such that $\C(e) \subseteq
    [a,b]$ holds almost surely for all $e' \in E_e$. Furthermore, define the
    function $$f(r, s) :=\frac{2r^\a}{r^\a + (\De - 1)s^\a}.$$ Then,
    $\C(e)\subseteq[f(a, b), f(b, a)]$ holds almost surely. In particular, for
    $G = \Z$ there exist $a'\in [a, 1)$ and $b' \in (1,b]$ such
    that $a'/b' \ge {(a / b)}^\alpha$ and $\C(e)\subseteq[a', b']$ holds almost
    surely.  \end{lemma}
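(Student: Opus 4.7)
The plan is two-fold. First, I would adapt the rate-comparison argument from \cref{bootStrapLem} to show that $\C(e) \subseteq [f(a,b), f(b,a)]$ holds on every $\De$-regular graph. Second, for $G = \Z$, I would refine this inclusion using the identity $f(a,b) + f(b,a) = 2$ (specific to $\De = 2$) together with the assumption $\C(e) \subseteq [a,b]$, which is valid since $e \in E_e$.

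For the first step, fix $\e > 0$. Since $E_e$ is finite on a $\De$-regular graph, the hypothesis yields a random time $T < \infty$ with $X_t(e') \in [a-\e, b+\e]$ for all $e' \in E_e$ and $t \ge T$ almost surely. Every edge incident to an endpoint $v_i$ of $e$ lies in $E_e$, so
\[
\pol_{v_i,e}(N_t) \in \left[\frac{(a-\e)^\a}{(a-\e)^\a + (\De-1)(b+\e)^\a},\ \frac{(b+\e)^\a}{(b+\e)^\a + (\De-1)(a-\e)^\a}\right]
\]
for $i \in \{1,2\}$ and $t \ge T$. Summing over the two endpoints of $e$ and dominating $\{N_t(e)\}_{t \ge T}$ from above and below by homogeneous Poisson processes with these total rates, exactly as in the proof of \cref{bootStrapLem}, the strong law of large numbers gives $X_-(e) \ge f(a,b)$ and $X_+(e) \le f(b,a)$ after letting $\e \to 0$.

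For the second step, $\De = 2$ gives $f(a,b) + f(b,a) = 2$, and $a < b$ yields the strict bounds $f(a,b) < 1 < f(b,a)$. Setting $a' := \max(a, f(a,b))$ and $b' := \min(b, f(b,a))$, Step 1 together with the hypothesis applied to $e' = e$ gives $\C(e) \subseteq [a', b']$ with $a' < 1 < b'$. To verify $a'/b' \ge (a/b)^\a$, introduce $h_1 := a + a^{1-\a}b^\a$ and $h_2 := b + a^\a b^{1-\a}$; elementary algebra shows $f(a,b) < a \iff h_1 > 2$ and $f(b,a) > b \iff h_2 < 2$. In the three cases that can occur the bound follows: if $f(a,b) \ge a$ and $f(b,a) \le b$ then $a'/b' = f(a,b)/f(b,a) = (a/b)^\a$ by direct cancellation; if only $f(b,a) > b$ fails, then $h_2 < 2$ rearranges to $f(a,b) \ge a^\a b^{1-\a}$, hence $a'/b' = f(a,b)/b \ge (a/b)^\a$; the remaining case $f(a,b) < a$, $f(b,a) \le b$ is symmetric.

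The main obstacle is ruling out the fourth case $f(a,b) < a$ and $f(b,a) > b$ simultaneously, which would require $h_1 > 2 > h_2$. Rewriting $h_1 = a(1 + (b/a)^\a)$ and $h_2 = b(1 + (a/b)^\a)$, one obtains the clean identity $h_1/h_2 = (a/b)^{1-\a} < 1$, so $h_1 < h_2$, a contradiction. This inequality, strict precisely because $\a < 1$, is what allows the bootstrap to extend all the way to $\a = 1$ on $\Z$.
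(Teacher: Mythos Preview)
Your proof is correct and follows the same strategy as the paper: the rate-comparison/Poisson-domination step is identical to the paper's argument (which in turn mirrors \cref{bootStrapLem}), and for the $\Z$ case you set $a' = a \vee f(a,b)$, $b' = b \wedge f(b,a)$ exactly as the paper does. The paper, however, simply asserts ``for $\De = 2$ we find $a'/b' \ge (a/b)^\a$'' without justification; your case analysis via $h_1, h_2$ and the identity $h_1/h_2 = (a/b)^{1-\a}$ fills in precisely the verification the paper omits, so your write-up is in fact more complete on this point.
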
 Before proving Lemma \ref{lemma:multi_tightenBounds}
    we explain how it implies part (3) of \cref{mainThm}.

%
%
\begin{proof}[Proof of part (3) of \cref{mainThm}]
	%
	%
First, $X_-(e)$ is strictly positive by \cref{noPercLem} using a similar
argument to the proof of part (1) of \cref{mainThm}. Hence,
\cref{lemma:multi_tightenBounds} gives that $X_-(e) \ge X_-(e)^\a 2^{-\a}$,
	i.e., $X_-(e) \ge 2^{-\a / ( 1 - \a)}$.
	%
	%
	In other words, almost surely $\C(e) \subseteq [a_1, b_1]$ where $a_1 := 2^{-\a / ( 1 - \a)}$ and $b_1 := 2$. Applying Lemma \ref{lemma:multi_tightenBounds} iteratively yields sequences ${\{a_i\}}_{i\ge 1}$ and
        ${\{b_i\}}_{i\ge 1}$ such that
        \begin{enumerate}
                \item ${\{a_i\}}_{i\ge 1}$ is increasing and bounded above
                        by $1$,
                \item ${\{b_i\}}_{i\ge 1}$ is decreasing and bounded below
                        by $1$,
                \item $b_{i + 1}/a_{i + 1}\le (b_i/a_i)^\a < b_i/a_i$, and
		\item $\C(e)\subseteq \bigcap_{i \ge 1}[a_i, b_i]$
                       holds almost surely for all $ e\in E$.
        \end{enumerate}
	Since the first three items imply that $a_i$ and $b_i$ converge to 1, we arrive at 
	\begin{align}
\P(\C(e) = 1) = \P(\cap_{i\ge 0}\{
                 \C(e)\subseteq[a_i,b_i]\})
                 = \lim_{i \to \infty}\P(
                \C(e)\subseteq[a_i,b_i])
                =  1,\label{eqn:regular_almost_sure_limit}
        \end{align}
	as asserted.
\end{proof}

%
%
Next, we prove Lemma~\ref{lemma:multi_tightenBounds}.
\begin{proof}[Proof of Lemma \ref{lemma:multi_tightenBounds}]
        Let
        \begin{align}
                G_{a,b} := \cap_{e'\in E_e}\{\C(e')\subseteq[a,b]\}
        \end{align}
        denote the event that $\C(e')\subseteq [a, b]$ holds for all
        $e'\in E_e$.
        Under this event, $X_t^{\{v,w\}}$ gains mass
        at a rate of at least
        \begin{align}
		a'' = f(at, bt) = f(a, b),
	\end{align}
        for $t$ large enough. 
        More precisely, under $G_{a,b}$ one can find a sequence
        ${\{\e_t\}}_{t\ge 0}$ with $\e_t\searrow 0$ such
        that almost surely
        \begin{align}
                N_t^{\{v,w\}}\ge P_v([0,t]\times U_{v,t})
                + P_w([0,t]\times U_{w,t}) + 1
        \end{align}
	for all $t>0$ where
        $|U_{v,t}| = |U_{w,t}| = a''/2- \e_t$.
        Analogous arguments to the one in the proof of
        \cref{lemma:loc_firstboundonC} give that almost surely
        \begin{align}
                \liminf_{t\to\infty} X_t^{\{v,w\}} \ge a',
        \end{align}
	where  {$a':=a\vee a''$}.
        Similar arguments give the upper bound $b' := b\wedge b''$ where $b''
        := f(b,a)$. { In the special case $\Delta=2$ (i.e.\ $G=\Z$)
        we find $a'/b'\ge {(a/b)}^{\a}$ as desired.}
        \end{proof}
%
%
Now, let $G$ be $\De$-regular for some $\De \ge 2$. In this case we do
not have $a'/b'\ge {(a/b)}^{\a}$ to prove \cref{mainThm}, but we still have the
sequences ${\{a_i\}}_{i\ge 1}$ and ${\{b_i\}}_{i\ge 1}$ with
$b_{i+1}/a_{i+1}\le b_i/a_i$. The idea is to show that for $\a$ sufficiently close to
$1/2$ the inequality is strict.
\begin{lemma}\label{lemma:improve_at_least_one}
        For $\a=1/2+\varepsilon$ with $\varepsilon$ small enough and $0<a<2/\Delta<b<2$
        we have $f(a,b)>a$ or $f(b,a)<b$.
\end{lemma}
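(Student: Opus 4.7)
The plan is to argue by contradiction. Suppose neither $f(a,b) > a$ nor $f(b,a) < b$ holds. Expanding the definition of $f$ and clearing denominators, these two assumptions rewrite respectively as $a + (\De-1)a^{1-\a}b^\a \ge 2$ and $b + (\De-1)b^{1-\a}a^\a \le 2$; subtracting yields the single inequality
\[
(\De-1)\bigl[a^{1-\a}b^\a - b^{1-\a}a^\a\bigr] \ge b - a,
\]
which is what must be refuted.

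The key step is then the substitution $m := \sqrt{ab}$ and $t := \log(b/a) > 0$, which packages the full two-dimensional freedom in $(a,b)$ into a single effective parameter. With $\a = 1/2 + \e$, straightforward computation gives $a^{1-\a}b^\a = m e^{\e t}$, $b^{1-\a}a^\a = m e^{-\e t}$, and $b - a = m(e^{t/2} - e^{-t/2})$. Dividing through by $2m$, the displayed inequality becomes
\[
(\De-1)\sinh(\e t) \ge \sinh(t/2),
\]
a scalar inequality in the single parameter $t > 0$ that is uniform over all admissible pairs $(a,b)$.

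To close the argument, I would study $F(t) := \sinh(\e t)/\sinh(t/2)$. By L'H\^opital $F(0^+) = 2\e$, and for $\e < 1/2$ the logarithmic derivative $\e\coth(\e t) - \tfrac{1}{2}\coth(t/2)$ is negative for all $t > 0$, as a consequence of the strict monotonicity of $x \mapsto x \coth x$ on $(0, \infty)$ (its derivative is positive because $\sinh(2x) > 2x$ for $x > 0$). Hence $F$ is strictly decreasing on $(0, \infty)$ with supremum $2\e$, so
\[
(\De-1)\sinh(\e t) < 2\e(\De-1)\sinh(t/2) \le \sinh(t/2)
\]
as soon as $\e \le 1/(2(\De-1))$, contradicting the inequality from the previous display.

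The main obstacle I anticipate is identifying the correct reformulation. A naive Taylor expansion in $\e$ around $1/2$ is tempting but insufficient: the admissible domain $\{(a,b) : 0 < a < 2/\De < b < 2\}$ is non-compact, and letting $a \to 0$ sends $b/a \to \infty$, so any remainder estimate expressed directly in $(a,b)$ blows up. The hyperbolic substitution sidesteps this by reducing the problem to a one-dimensional inequality in $t$, where the classical $x \coth x$ monotonicity furnishes a clean quantitative threshold on $\e$.
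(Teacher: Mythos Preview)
Your proof is correct and shares the paper's overall contradiction strategy, but the execution differs in a useful way. The paper combines the two assumed inequalities $f(a,b)\le a$ and $f(b,a)\ge b$ by \emph{division}, obtaining an inequality purely in $\rho=b/a$, then takes logarithms and shows the resulting expression is strictly decreasing in $\rho$ by bounding its derivative. You instead \emph{subtract} the two inequalities and introduce the substitution $m=\sqrt{ab}$, $t=\log(b/a)$, which makes the scale $m$ cancel and reduces everything to the scalar inequality $(\De-1)\sinh(\e t)\ge \sinh(t/2)$; the monotonicity of $x\mapsto x\coth x$ then finishes cleanly. Your route yields the explicit threshold $\e\le 1/(2(\De-1))$, whereas the paper's (looser) derivative bound gives only $\e< 1/(2(2\De-1))$; with a sharper estimate the paper's method would recover your threshold too, so the two arguments are ultimately equivalent in strength but yours is more transparent quantitatively.
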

\begin{proof}
        Assume both statements are wrong, i.e. $f(a,b)\le a$ and $f(b,a)\ge b$, then
        this gives 
        \begin{align}
                \rho \le\rho^{2\a}
                \frac{1+\frac{\rho^{-\a}}{\Delta-1}}{1+\frac{\rho^{\a}}{\Delta-1}}
                \label{eqn:regular_to_contradict}
        \end{align}
for $\rho:=b/a$.
        Insert $\alpha=1/2+\varepsilon$ and take log on both sides to get
        \begin{align}
                0\le 2\varepsilon\log\rho +
                \log\Big(1+\frac{\rho^{-1/2-\varepsilon}}{\Delta-1}\Big)
                - \log\Big(1+\frac{\rho^{1/2+\varepsilon}}{\Delta-1}\Big)\;.
        \end{align}
        Note that equality holds for $\rho=1$ so if the right-hand side is decreasing in $\rho$ then
        this contradicts the above inequality for $\varrho>1$. To that end, note that
        the derivative
        \begin{align}
                \frac1\rho\Big(2\varepsilon
                -(1/2+\varepsilon)
                \Big(\frac{1}{1+(\De-1)\rho^{-1/2-\varepsilon}}
                +
                \frac1{1+(\De-1)\rho^{1/2+\varepsilon}}
        \Big)\Big)
        \end{align}
        of the right-hand side is bounded above for all $\rho\ge 1$ by
        \begin{align}
                \frac1\rho\Big(2\varepsilon
                -\frac{(1/2+\varepsilon)}{\Delta}\Big)
        \end{align}
        which is negative for $\varepsilon$ small enough and hence we get a
        contradiction to \ref{eqn:regular_to_contradict}.
\end{proof}
Not improving both bounds at each application of the bootstrap property means that the proof for a uniform lower bound on
$\mc C(e)$ in part (1) of \cref{mainThm} does not work since
we do not immediately get a contradiction. The following Lemma helps remedy
this.
\begin{lemma}\label{lemma:lower_threshold_for_improvement}
        For $a<2\Delta^{-1/(1-\a)}$ we have $f(a,2)>a$.
\end{lemma}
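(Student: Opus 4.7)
The plan is to directly unfold the inequality $f(a,2)>a$ and reduce it to a simple consequence of the hypothesis $a<2\Delta^{-1/(1-\a)}$. Concretely, I will clear denominators to rewrite $f(a,2)>a$ as
\begin{align}
a^\a(2-a) > a(\De-1)2^\a,
\end{align}
which is equivalent (after dividing by $a>0$, a case one should briefly dispose of since the conclusion is trivial when $a=0$) to
\begin{align}
2-a > a^{1-\a}(\De-1)2^\a.
\end{align}
So the goal becomes an estimate on $a^{1-\a}$ in terms of $2-a$.

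Next, I would plug in the hypothesis. Raising $a<2\Delta^{-1/(1-\a)}$ to the power $1-\a>0$ yields $a^{1-\a}<2^{1-\a}/\De$, and multiplying by $(\De-1)2^\a$ gives $a^{1-\a}(\De-1)2^\a<2(\De-1)/\De$. Consequently, it is enough to verify the elementary bound
\begin{align}
2-a \ge \frac{2(\De-1)}{\De},
\end{align}
which reorganises to $a\De\le 2$, i.e.\ $a\le 2/\De$.

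Finally, I would close the argument by observing that the hypothesis already forces $a<2/\De$: since $\a<1$ and $\De\ge 1$, the exponent $1/(1-\a)\ge 1$ yields $\De^{1/(1-\a)}\ge\De$, so that $2\De^{-1/(1-\a)}\le 2/\De$. Combined with the strict hypothesis, this delivers $a<2/\De$, and the chain of inequalities above becomes strict. The argument is essentially an exercise in bookkeeping; the only mild subtlety, and hence the sole ``obstacle'', is making sure that the strict inequality from the hypothesis survives through the two sufficient-condition reductions — which it clearly does because the hypothesis enters via a strict bound on $a^{1-\a}$, while the final reduction $a\le 2/\De$ is used only as a non-strict sufficient condition. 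Edge cases $\De=1$ (where $(\De-1)2^\a$ vanishes and $f(a,2)=2>a$ trivially) and $a=0$ (where $f(0,2)=0=a$ is a boundary, not covered by the strict hypothesis since then the hypothesis gives strict positivity on the right-hand side only) need a short remark, but do not cause any real trouble.
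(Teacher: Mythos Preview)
Your argument is correct and essentially the same as the paper's: both verify the inequality directly from the consequence $a^{1-\a}<2^{1-\a}/\De$ of the hypothesis. The paper compresses this into the one-line chain
\[
f(a,2)=\frac{2a^\a}{a^\a+(\De-1)2^\a}>\frac{2a^\a}{2^\a\De}=a\cdot\frac{2^{1-\a}}{a^{1-\a}\De}>a,
\]
bounding the denominator by $\De 2^\a$, whereas you first unfold to $2-a>a^{1-\a}(\De-1)2^\a$ and then reduce to the sufficient condition $a\le 2/\De$; the extra reduction step is unnecessary but harmless. Two minor points: when you write ``dividing by $a>0$'' you are in fact dividing by $a^\a$; and your remark on the case $a=0$ is off --- the hypothesis \emph{does} allow $a=0$, where the strict conclusion fails, so an implicit $a>0$ is understood (as in the paper).
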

\begin{proof}
        Straightforward calculation as
        \begin{align}
                f(a,2)=\frac{2a^{\a}}{a^{\a}+(\Delta-1)2^{\a}}
                > \frac{2a^{\a}}{2^{\a}\Delta}
                =a \frac{2^{1-\a}}{a^{1-\a}\Delta}
		>a.&\qedhere
        \end{align}
\end{proof}
\begin{proof}[Proof of part (2) of \cref{mainThm}]
        Take the $\varepsilon$ from \cref{lemma:improve_at_least_one} and
        consider $\alpha=1/2+\varepsilon$. $X_-(e)$ is bounded above by
        \cref{lemma:loc_firstboundonC} and strictly positive by
        \cref{noPercLem} and \cref{lemma:lower_threshold_for_improvement} using
        analogous arguments as in the proof of part (1) of this theorem.  So we
        find $a \le 2/\Delta$ and $b \ge 2/\Delta$ such that
        $\mathcal{C}(e)\subseteq[a,b]$ for all $e\in E$. 
	    We choose $a$ maximal and $b$ minimal with that property. To derive a contradiction, assume that $a < b$.
        Then, by \cref{lemma:multi_tightenBounds} and \cref{lemma:improve_at_least_one} we can tighten the bounds as
        \begin{align}
                \mathcal{C}(e)\subseteq [a\vee f(a,b), b\wedge
		f(b,a)] \subsetneq [a, b]\;\label{eqn:new_bounds_on_C}.
        \end{align}
        This contradicts the maximality/minimality of $a$ and $b$,
        and since $f(a,b)\le 2/\De \le f(b,a)$ therefore gives $a = b = 2/\Delta$.
\end{proof}

\subsection*{Acknowledgement.}The authors thank both anonymous referees for the
careful reading of the manuscript and the constructive feedback. Their comments
and suggestions substantially helped to improve the presentation of the
material. The authors thank M.~Holmes and V.~Kleptsyn for illuminating
discussions and ideas for future work.
\bibliography{./main.bbl}
\bibliographystyle{abbrv}

\end{document}